\documentclass[11pt]{amsart}

\usepackage{amsmath}
\usepackage{amsthm}
\usepackage{amsbsy}
\usepackage{amssymb}
\usepackage{mathtools}
\usepackage{calrsfs}
\usepackage{comment}
\usepackage[shortlabels]{enumitem}
\usepackage[normalem]{ulem}
\usepackage{amsrefs}
\usepackage{tikz}
\usetikzlibrary{positioning,patterns,calc}
\usepackage{mathrsfs}
\DeclareMathAlphabet{\mathpzc}{OT1}{pzc}{m}{it}
\usepackage[utf8]{inputenc}
\usepackage{hyperref}

\textwidth = 380pt
\textheight= 620pt





\newtheorem{theorem}{Theorem}[section]
\newtheorem{lemma}[theorem]{Lemma}
\newtheorem{remark}[theorem]{Remark}
\newtheorem{proposition}[theorem]{Proposition}
\newtheorem{corollary}[theorem]{Corollary}

\newtheorem{definition}[theorem]{Definition}
\newtheorem*{theorem*}{Theorem}

\makeatletter
\@addtoreset{claim}{theorem}
\makeatother

\theoremstyle{plain}

\theoremstyle{plain}


\usepackage{enumitem}
\usepackage{lipsum}
\setlist[itemize]{leftmargin=*}

\def\bpf{\begin{proof}}
	\def\epf{\end{proof}}
\def\be{\begin{equation}}
	\def\ee{\end{equation}}
\def\bea{\begin{eqnarray}}
	\def\eea{\end{eqnarray}}
\def\bt{\begin{theorem}}
	\def\et{\end{theorem}}
\def\bl{\begin{lemma}}
	\def\el{\end{lemma}}
\def\br{\begin{remark}}
	\def\er{\end{remark}}
\def\bc{\begin{corollary}}
	\def\ec{\end{corollary}}
\def\bd{\begin{definition}}
	\def\ed{\end{definition}}
\def\bp{\begin{proposition}}
	\def\ep{\end{proposition}}

\def\be0{\partial_0 E}
\def\be1{\partial_1 E}

\title{CMC hypersurface with finite index in hyperbolic space $\mathbb{H}^4$ }
\author{Han Hong}
\address{Department of Mathematics and Statistics\\ Beijing Jiaotong University\\ Hai Dian District\\ Beijing\\ China 100044}
\email{hanhong@bjtu.edu.cn}

\begin{document}

	\begin{abstract}
		In this paper, we prove that there are no complete noncompact constant mean curvature hypersurfaces with the mean curvature $H>1$, finite index and finite topology in hyperbolic space $\mathbb{H}^4$. A more general nonexistence result can be proved in  a $4$-dimensional Riemannian manifold with certain curvature conditions. We also show that $4$-manifold with $\operatorname{Ric}>1$ does not contain any complete noncompact  minimal stable hypersurface with finite topology.
		
		The proof relies on the $\mu$-bubble initially introduced by Gromov and further developed by Chodosh-Li-Stryker in the context of stable minimal hypersurfaces.
	\end{abstract}
	
	\maketitle
	
	\section{Introduction}\label{introduction}
	The well-known Bernstein problem has been completely settled due to works of W. Fleming \cite{Flembing62}, De Giorgi \cite{De65}, F. Almgren \cite{Almgren66}, J. Simons \cite{Simons68}, and E. Bombieri, De Giorgi, and E. Giusti \cite{BDG69}. It states that a complete minimal graph over $\mathbb{R}^{n}$ for $2\leq n\leq 6$ must be hyperplane. As a natural generalization, it was proved by D. Fischer-Colbrie and R. Schoen \cite{Fischer-Colbrie-Schoen-The-structure-of-complete-stable}, M. do Carmo and C. Peng \cite{doCarmo-Peng} and A. Pogorelov \cite{Pogorelov81} independently that the only two-sided stable complete minimal surfaces in $\mathbb{R}^3$ are planes.  R. Schoen, L. Simon and S. Yau \cite{SSY75} proved the higher dimensional stable Bernstein theorem for $\mathbb{R}^{n+1}$ for $n\leq 5$ under the Euclidean volume growth condition. Note that recently C. Bellettini generalized it to $\mathbb{R}^7$ in \cite{bellettini}. 
	
	We recall that a two-sided minimal immersion $M^n$ into a Riemannian manifold $X^{n+1}$ is stable if for any compactly supported function $\varphi$ on $M$, we have
	\[\int_M|\nabla\varphi|^2\geq \int_M (|A_M|^2+\operatorname{Ric}_X(\nu,\nu))\varphi^2\]
	or the Jacobi operator $J=\Delta_M+|A_M|^2+\operatorname{Ric}_N(\nu,\nu)$ is nonnegative. We say that the hypersurface $M$ has finite index if $\sup_i\operatorname{Ind}(\Omega_i)<\infty$ where $\{\Omega_i\}$  are compact subsets of $M$ exhausting $M$. $M$ is stable if $\operatorname{Ind}(\Omega_i)=0$ for any $i$.
	
	In the last few years, there were some exciting progress toward the stable Bernstein problem in Euclidean space. O. Chodosh and C. Li \cite{chodoshliR4} eliminated the volume growth condition in $\mathbb{R}^4$ by using level set method, thus proving the 4-dimensional stable Bernstein theorem. G. Catino, P. Mastrolia and A. Roncoroni \cite{catino} provided a different proof by using the conformal method and the volume comparison under the nonnegative 2-Bakry-Emery-Ricci curvature. Later, O. Chodosh and C. Li \cite{chodoshliR4anisotropic} discovered a new method in $\mathbb{R}^4$ based on the theory of the $\mu$-bubble due to M. Gromov \cite{gromovmububble}. This idea was also successively applied to $\mathbb{R}^5$\cite{chodoshliR5} and $\mathbb{R}^6$\cite{mazet}. Thus the stable Bernstein problem remains open in  $\mathbb{R}^7$.
	
	The stability concept also makes sense for constant mean curvature (CMC) hypersurfaces. Let $M^n\subset X^{n+1}$ be a CMC hypersurface, we say that it is weakly stable if
	\[\int_M |\nabla\varphi|^2\geq \int_M (|A_M|^2+\operatorname{Ric}_X(\nu,\nu))\varphi^2\]
	for any $\varphi\in C_0^\infty(M)$ satisfying
	\[\int_M \varphi=0.\]
	This stems from the second derivative of area functional under the compactly supported variation that preserves the enclosed volume. That $M$ has finite index can defined same as the minimal case above.

	H. Alencar and M. do Carmo \cite{alencardocarmopolynomialgrowth} proved that a complete CMC hypersurface in $\mathbb{R}^{n+1}$ with finite index and with polynomial volume growth must be minimal. Since entire CMC graphs are stable and of polynomial volume growth, their result generalized S. Chern's theorem \cite{cherntheorem}.  The polynomial growth was relaxed to sub-exponential growth later by M. do Carmo and D. Zhou \cite{docarmozhoudetangsubexponential}. Thus M. do Carmo posed the question \cite{docarmoquestionlecturenote}: Is a complete noncompact weakly stable (or generally finite index) constant mean curvature hypersurface in $\mathbb{R}^{n+1}$, where $n\geq 3$, necessarily minimal ? This question had already been answered positively for $n=2$ (see \cite{daSilveira-Stability-of-complete}). Later, the answer was proved to be positive for $n=3,4$ (by M. Elbert, B. Nelli and H. Rosenberg\cite{elbertnellirosenbergpams}, as well as independently by X. Cheng \cite{chengxufiniteindex}, both using a generalized Bonnet-Meyer's argument). Assuming the volume entropy of $M$ is zero, S. Ilias, B. Nelli and M. Soret \cite{zeroentropy} provided a positive answer to M. do Carmo's question in all dimensions. Thus, M. do Carmo's question remains open in higher dimensions.
	
	The hypersurfaces with constant mean curvature one in hyperbolic space behave akin to minimal hypersurfaces in Euclidean space (see \cite{bryant}). Thus, regarding do Carmo's question, it is natural to ask: Does a complete noncompact constant mean curvature hypersurface $\mathbb{H}^{n+1}$, $n\geq 2$, with mean curvature $H\geq 1$ and finite index necessarily have mean curvature $1$ ? 
	
	In this paper, we answer the question positively in hyperbolic space $\mathbb{H}^4$ under extra conditions.  Specifically, we prove
	\begin{theorem}\label{maintheoremintroduction}
		There is no complete, noncompact, constant mean curvature hypersurface $M$ in $\mathbb{H}^4$ with finite-dimensional $H_0^1(M)$, a finite number of ends, and 
		\[H_M>1\ \ \text{and}\ \ \operatorname{Ind}(M)<\infty.\]
	\end{theorem}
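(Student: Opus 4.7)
The plan is to adapt the $\mu$-bubble technique of Chodosh-Li-Stryker to the weakly stable CMC setting and derive a contradiction from the topological hypotheses at infinity.

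First, I would reduce to a single end. Since $M$ has finite index, a standard eigenfunction argument in the spirit of Fischer-Colbrie produces a compact set $K\subset M$ outside of which every component of $M\setminus K$ is weakly stable: on such an end $E$ one has
\[\int_E |\nabla\varphi|^2 \geq \int_E \bigl(|A|^2 + \operatorname{Ric}_{\mathbb{H}^4}(\nu,\nu)\bigr)\varphi^2\]
for all $\varphi\in C_c^\infty(E)$ with $\int_E \varphi = 0$. The finite-ends hypothesis lets me focus on one such $E$. The role of $H>1$ is quantitative positivity: since $\operatorname{Ric}_{\mathbb{H}^4}(\nu,\nu) = -3$, and Cauchy-Schwarz on the three principal curvatures of $M^3$ gives $|A|^2\geq 3H^2$, one obtains the pointwise lower bound $|A|^2 + \operatorname{Ric}_{\mathbb{H}^4}(\nu,\nu) \geq 3(H^2-1) > 0$ on $E$. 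This uniform positivity plays the role that a positive ambient Ricci curvature plays in the Chodosh-Li-Stryker framework for stable minimal hypersurfaces.

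Next, I would carry out a $\mu$-bubble construction on $E$. Pick a smooth weight $h$ on a subregion $\Omega\subset E$ bounded by two widely separated level sets of $d(\cdot,K)$, with $h$ blowing up near each of the two boundary components, and minimize the prescribed-mean-curvature functional $\mathcal{A}(\Omega') = \mathcal{H}^2(\partial^*\Omega') - \int_{\Omega'} h\, d\mathcal{H}^3$. The resulting smooth minimizer $\Sigma$ is a closed $2$-surface with prescribed mean curvature $h|_\Sigma$ satisfying the weighted second variation. Feeding that second-variation inequality into the Gauss equation for $\Sigma\subset E$ and absorbing the strict positivity $3(H^2-1)$ from the stability inequality of the previous step, I expect a Gauss-Bonnet-type argument to force $\Sigma$ to have positive Euler characteristic, hence to be a union of $2$-spheres. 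Performing the construction on a nested exhaustion of $E$ produces an obstruction incompatible with the finite-dimensionality of $H_0^1(M)$ together with the finite-ends assumption: the separating cycles produced by the $\mu$-bubbles yield more independent topological data than the hypotheses allow.

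The principal obstacle is the mean-zero constraint on admissible test functions in the CMC stability inequality. In the minimal setting treated by Chodosh-Li-Stryker, arbitrary compactly supported $\varphi$ may be inserted into the second variation of the $\mu$-bubble; here, one is restricted to functions with $\int_E\varphi=0$, and the natural test functions arising in the $\mu$-bubble second variation (bumps localized near $\Sigma$) fail this constraint. One route is to manufacture mean-zero test functions by subtracting a small compensating bump supported far from $\Sigma$ in $E$, using noncompactness and the uniform lower bound on $|A|^2 + \operatorname{Ric}_{\mathbb{H}^4}(\nu,\nu)$ to absorb the resulting error terms. An alternative is to re-derive the $\mu$-bubble second variation with a Lagrange multiplier enforcing the volume constraint and verify that the multiplier can be controlled. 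Making either route compatible with the existence theory for $\mu$-bubbles on a noncompact CMC end in $\mathbb{H}^4$ is the technical heart of the argument, and it is here that the adaptation from the minimal case to the CMC case genuinely requires new input.
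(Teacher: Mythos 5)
Your overall strategy (stability outside a compact set, a $\mu$-bubble construction on the ends producing controlled spheres) is the right starting point and matches the paper's, but two things go wrong. First, the issue you single out as ``the technical heart'' --- the mean-zero constraint --- is not actually an obstacle. The hypothesis $\operatorname{Ind}(M)<\infty$ is taken with respect to the unconstrained Jacobi operator $J=\Delta_M+|A_M|^2+\operatorname{Ric}_X(\nu,\nu)$ (and even if one insisted on the weak index, weak and strong indices differ by at most one, so finiteness is equivalent). Finite index then yields \emph{strong} stability outside a compact set by the Fischer--Colbrie argument, i.e.\ the inequality $\int|\nabla\varphi|^2\geq\int(|A_M|^2+\operatorname{Ric}(\nu,\nu))\varphi^2$ for \emph{all} compactly supported $\varphi$, with no volume constraint. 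So the Lagrange-multiplier machinery you propose is unnecessary --- which is fortunate, since you do not carry it out. A separate point on the construction itself: the functional you minimize, $\mathcal{H}^2(\partial^*\Omega')-\int_{\Omega'}h$, is unweighted; to ``absorb the strict positivity $3(H^2-1)$ from the stability inequality'' into the second variation of $\Sigma$ you need the \emph{warped} functional $\int_{\partial^*\Omega'}u^k-\int u^k h$, where $u>0$ solves $\Delta_M u+(|A_M|^2+\operatorname{Ric}(\bar\nu,\bar\nu))u\leq 0$; the term $-k\Delta_M u/u$ appearing in the second variation is the only conduit through which the stability of $M$ reaches $\Sigma$.

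The more serious gap is the endgame. The spheres produced by the $\mu$-bubbles do \emph{not} generate ``more independent topological data than the hypotheses allow'': in the actual proof, for $R$ large each end contributes exactly one such sphere (this is the content of the topological lemma on exhaustions, which uses finite $H_0^1$ and finitely many ends as an \emph{input}, not as the thing contradicted), so finite topology is entirely consistent with the bubble construction. The contradiction is metric, not topological: one shows $M$ would have \emph{finite volume}. This requires several ingredients absent from your sketch: (i) all ends are nonparabolic (proved by showing a parabolic end would have finite volume, against the fact that ends of CMC hypersurfaces in bounded geometry have infinite volume); (ii) nonparabolicity lets one extend the stability Poincar\'e inequality from $M\setminus K$ to all of $M$, giving $|\Omega|\leq C\tau^{-2}|\Omega_\tau\setminus\Omega|$ for every compact $\Omega$; (iii) a curvature estimate for stable CMC hypersurfaces (point-picking plus the stable Bernstein theorem in $\mathbb{R}^4$) gives a uniform Ricci lower bound, so Bishop--Gromov bounds the volume of the bounded-diameter collar around the (boundedly many, boundedly large) bubbles; (iv) applying (ii) to the exhaustion $\Omega_R$ bounded by the bubbles then gives $|B_R(p)|\leq C$ uniformly in $R$, contradicting the infinite volume of $M$. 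Without steps (i)--(iv) your argument does not close.
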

	This is equivalent to that complete CMC hypersurfaces in $\mathbb{H}^4$
	satisfying four
	conditions in the theorem are compact. Here $H_0^1(M)$ denotes the first cohomology group with compact support of $M$. In fact, the assumption of finite-dimensional $H_0^1(M)$ and a finite number of ends can be replaced by the finite first $L^2$-Betti number, i.e., $\dim(H_2^1(M))<\infty$ (see Lemma \ref{carron's lemma}). The hypothesis of the mean curvature in the theorem is sharp since the horosphere is a hypersurface of constant mean curvature one and is stable, naturally of finite index. It is worth mentioning that there exist rotational spherical catenoids with finite total curvature and finite index in $\mathbb{H}^4$ with $0\leq H<1$ and they are complete noncompact \cites{docarmorotation, berarddocarmosantosfinitetotalcurvature}. In fact, we prove a more general nonexistence theorem in 4-dimensional Riemannian manifolds (see Theorem \ref{maintheoremlater}). 
	
	Previous to our result, this was proved by A. Da Silveira \cite{daSilveira-Stability-of-complete} in dimension $n+1=3$ without any extra condition. For higher dimensions, there are only partial results. X. Cheng \cite{chengxufiniteindex} proved that any complete finite index hypersurface in hyperbolic spaces $\mathbb{H}^4$ and $\mathbb{H}^5$ with constant mean curvature $H$ satisfying $H^2>\frac{10}{9}$ and $H^2>\frac{7}{4}$, respectively, must be compact. These two numbers were improved to $\frac{64}{63}$ in $\mathbb{H}^4$ and $\frac{175}{148}$ in $\mathbb{H}^5$ by Q. Deng\cite{dengqintao}. As mentioned before, the dimension restriction and strange mean curvature assumption come from the Bonnet-Meyer's type method and appear a lot in the study of the structure of CMC hypersurfaces. In fact, CMC hypersurface $M$ in general ambient Riemannian manifold with following assumption (see \cites{dengqintao,chengxustructure})
	\[\inf_M\operatorname{k-biRic}\geq -\frac{4k^2-n+1}{4((k-1)n+1)}n^2H_M^2\]
	for $k\in (\frac{n-1}{n},\frac{4}{n-1})$ and $n=3,4,5$ usually owns compactness or connectedness at infinity. This method seems to prevent the generalization to higher dimensions and the optimal mean curvature. On the other hand, in higher dimensional hyperbolic space $\mathbb{H}^{n+1}$, S. Ilias, B. Nelli and M. Soret \cite{zeroentropy} solved this question for all $n$ but assuming zero volume entropy of $M.$
	
	Our idea of proof is different and uses a different assumption from zero volume entropy assumption for $\mathbb{H}^4$  . The proof relies on the volume growth estimate based on the warped $\mu$-bubble technique. We outline the proof as follows. 
	\begin{itemize}
		\item Since $M$ has finite index, we examine the components of the complement of a sufficiently large compact subset (e.g., a metric ball of large radius) in $M$. These components are stable manifolds with compact boundary. Furthermore, the stability inequality ensures the validity of a Poincar\'e inequality on the complement. By extending this inequality to the entire manifold (Theorem \ref{poincare inequality}), we deduce a volume inequality.
		\item To estimate the volume growth of geodesic ball, we find a larger set $\tilde{\Omega}$ containing $B_R(p)$ whose boundary consists of $\mu$-bubbles. The area of these bubbles are uniformly controlled if $M$ is a CMC hypersurface  with $H_M>1$ in hyperbolic space, or even more general manifolds with the following $k$-weighted scalar curvature condition (Theorem \ref{bubblediameterestiamte}):
		$$\inf_M \operatorname{k-R_X} +(6k+6)H_M^2\geq 2\epsilon_0$$ for a positive constant $\epsilon_0$ and $k\in[1/2,2]$.
		This part restricts our dimension to $n=4$. However, the number of bubbles may not be uniformly bounded, especially when the manifold is not simply-connected.

		\item To control the number of bubbles, we find an exhaustion $\Omega_R$ of the hypersurface $M$. By using a topological lemma, we know that the number of boundary components of $\Omega_R$ is uniformly bounded. Since we lack of isoperimetric inequality for CMC hypersurfaces, we can not bounded the volume of $\Omega_R$ directly. Instead, we bound the volume of slices by using Bishop-Gromov volume comparison theorem. Eventually, we show that $M$ has finite volume, which is a contradiction.
	\end{itemize}
	The  volume growth of the end is also studied in \cite{chodoshlistrykervolumegrowthproccedings} under nonnegative Ricci curvature condition.

	One interesting consequence of Theorem \ref{maintheoremlater} is the following theorem. 
	
	\begin{theorem}
		\label{theorem2}
		Let $X^4$ be a Riemannian manifold with $\operatorname{Ric}_X \geq 1$. Then there exists no complete noncompact minimal hypersurface $M^3 \subset X$ satisfying both:
		
		\begin{itemize}[leftmargin=2em]
			\item Finite index (including the stable case), and
			\item A finite number of ends and finite-dimensional $H_0^1(M)$.
		\end{itemize}
	\end{theorem}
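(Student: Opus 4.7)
The plan is to deduce Theorem \ref{theorem2} directly from the more general nonexistence result (Theorem \ref{maintheoremlater}), by verifying that a minimal hypersurface in a $4$-manifold with $\operatorname{Ric}_X \geq 1$ satisfies that theorem's curvature hypothesis with $H_M \equiv 0$. Since a minimal hypersurface is a CMC hypersurface with $H_M = 0$, the hypothesis
\[
\inf_M \operatorname{k-R_X} + (6k+6)H_M^2 \geq 2\epsilon_0
\]
collapses to the requirement that the $k$-weighted scalar curvature $\operatorname{k-R_X}$ of $X$ along $M$ be bounded below by a positive constant for some admissible $k \in [1/2, 2]$.

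The quantity $\operatorname{k-R_X}$ is assembled from intermediate Ricci-type curvatures, each of which is a sum of Ricci evaluations on orthonormal frames and therefore is controlled from below by the uniform lower bound $\operatorname{Ric}_X \geq 1$. For any $k \in [1/2, 2]$ this yields $\operatorname{k-R_X} \geq c_k > 0$ pointwise along $M$, where $c_k$ depends only on $k$. Choosing, say, $k = 1$ and $\epsilon_0 = c_1/2$ therefore verifies the curvature assumption of Theorem \ref{maintheoremlater}. The stable case is automatically included, since stability is the same as index zero.

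The remaining three hypotheses of Theorem \ref{maintheoremlater}, namely finite index, finitely many ends, and $\dim H_0^1(M) < \infty$, are imposed directly in the statement of Theorem \ref{theorem2}. Hence Theorem \ref{maintheoremlater} forces $M$ to be compact, contradicting the standing assumption that $M$ is complete and noncompact. The main obstacle is hidden entirely inside Theorem \ref{maintheoremlater}; once the $k$-weighted scalar curvature bound is unwound from the definition, the reduction is a bookkeeping step, and no further $\mu$-bubble comparison or Poincar\'e-inequality analysis on $M$ is needed beyond what has already been established for the general theorem.
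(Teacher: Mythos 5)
Your proposal is correct and is essentially the paper's own argument: Theorem \ref{theorem2} is presented there as a direct consequence of Theorem \ref{maintheoremlater}, obtained by taking $H_M=0$ so that $\operatorname{Ric}_X\ge 1$ gives $3H_M^2+\inf_M\operatorname{Ric}_X\ge 1$ (note that this, rather than the inequality $\inf_M \operatorname{k-R_X}+(6k+6)H_M^2\ge 2\epsilon_0$ you quote --- which is the hypothesis of Theorem \ref{bubblediameterestiamte} --- is the stated hypothesis of Theorem \ref{maintheoremlater}, though the $k$-weighted bound you verify is exactly what its proof uses internally). The only caveat, which the paper itself also leaves implicit, is that Theorem \ref{maintheoremlater} additionally assumes $X$ has bounded geometry, a hypothesis not present in the statement of Theorem \ref{theorem2} and not supplied by your reduction.
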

	
	This theorem can be thought as a noncompact analogue of the well-known result: stable closed minimal hypersurface does not exist in a space with positive Ricci curvature.
	
	\begin{remark}
		The condition of finite topology is important. The following example is provided by Chodosh-Li-Stryker \cite[Appendix A.2]{chodosh-li-stryker}. Consider $M_0=(\mathbb{S}^1\times \mathbb{S}^2)\#(\mathbb{S}^1\times \mathbb{S}^2)$ with a scalar flat metric $h_0.$ The universal cover $\tilde{M}$ of $M_0$ has a positive lower bound $\epsilon$ for its essential spectrum. Consider $X=M_0\times (-\delta,\delta)$ with metric $h_t=h_0+t^2(\operatorname{Ric}_{h_0}-2\epsilon h_0)+dt^2$ for sufficiently small $\delta$ and $t\in (-\delta,\delta)$. Then $\operatorname{Ric}_X\geq \epsilon$ and $\tilde{M}$ is a complete noncompact stable minimal (in fact, totally geodesic) hypersurface  in $X$. However, the topology of $\tilde{M}$ is not finite since it has infinitely many ends. 
	\end{remark}
	\begin{remark}
		The manifold $X$ in last remark is not complete. Chodosh-Li-Strker raised a question that whether completeness of the ambient manifold is important in proving nonexistence of complete noncompact stable minimal hypersurface in space with positive Ricci curvature. We provide a different assumption for this question instead of completeness.
	\end{remark}
	
	We end the introduction with two questions. 
	
	\vskip.1cm
	\noindent\textit{Question.}
	A complete constant mean curvature hypersurface $M$ in $\mathbb{H}^4$ satisfying $H_M \in(1,\frac{64}{63}]$ and $\operatorname{Ind}<\infty$ without zero volume entropy condition or finite topology must be compact ?
	\vskip.1cm
	
	\vskip.1cm
	\noindent\textit{Question.}
	A (weakly) stable complete noncompact constant mean curvature hypersurface in $\mathbb{H}^{n+1}$ with $H=1$ and $n+1=4$ must be horosphere ?
	\vskip.1cm

	The answer to above questions together will solve the generalized stable Bernstein problem in $\mathbb{H}^4$ (see section \ref{last section}). The case $n+1=3$ was solved by A. Da Silveira \cite{daSilveira-Stability-of-complete}.

	\subsection{Acknowledgements}
	We are grateful to Guoyi Xu, Gaoming Wang, Zetian Yan for some helpful discussions. We also thank professors Jingyi Chen and Haizhong Li for their interests and the last section benefits from a discussion with Jingyi Chen. We appreciate the referee for suggestions that greatly improve the paper. The author is supported by NSFC No. 12401058 and the Talent
	Fund of Beijing Jiaotong University No. 2024XKRC008.

	\section{Preliminary}

	Parabolicity and nonparabolicity are important concepts regarding complete noncompact manifolds. It has been thoroughly 
	studied by using harmonic function theory in \cite{li-tam-harmonicfunctiontheory}. We recall some basics for convenience of readers.
	
	\begin{definition}
		A manifold $M$ is parabolic if it does not admit a symmetric, positive Green's function. Otherwise it is nonparabolic.
	\end{definition} 
	
	The notion of parabolicity can be localized at an end of the manifold. Let $\Omega$ be a compact subset of $M$, an end $E$ with respect to $\Omega$ is simply an unbounded component of $M\setminus \Omega$. An end $E$ is parabolic if it does not admit a positive harmonic function $f$ satisfying
	\[f|_{\partial E}=1\ \ \ \text{and}\ \ \ f|_{E^\circ}<1.\]
	Otherwise, it is nonparabolic.
	A complete noncompact $M$ is nonparabolic if and only if $M$ has a nonparabolic end. It is also possible for a nonparabolic manifold to have many parabolic ends.  The compact set $\Omega$ is usually set to be a geodesic ball with large radius $R$ and centered at point $p$. Hereafter, without ambiguity, we simply write $B_R$ instead of $B_R(p).$ 
	
	
	Let $E_0$ be one component of $M\setminus B_{R_0}$ and $L$ be a positive number. One then considers the sequence of harmonic functions defined on $E_k=E_0\cap B_{kL+R_0}$ satisfying
	\[\begin{cases}
		f_k=0 &\text{on} \  E_{0}\cap \partial B_{kL+R_0}
		\\
		f_k=1 & \text{on} \ \partial E_0
	\end{cases}\]
	Notice that $E_0=\cup_{i=1}^{\infty}E_k$. We also often use $\{E_k\}_{k=1}^{\infty}$ to denote the end $E_0$. An end $\{E_k\}_{k=1}^{\infty}$ with respect to $B_{R_0}$ is nonparabolic if and only if $f_k$ converges uniformly and locally to a nonconstant function $f$ defined on $E_0$ as $k\rightarrow \infty$. $f\equiv 1$ when it is parabolic.
	
	We first prove 
	\begin{lemma}\label{noparabolicends}
		Let $M$  be a complete CMC hypersurface in $\mathbb{H}^{n+1}$ with finite index and the scalar (averaged) mean curvature satisfies $H_M>1$. Then $M$ has no parabolic ends.
	\end{lemma}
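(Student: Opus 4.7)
I argue by contradiction: assume $E_0$ is a parabolic end of $M$. The strategy is to convert the finite index hypothesis into a Poincar\'e inequality on $E_0$, which then forbids parabolicity.

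Choose $R_0 > 0$ so that every negative Jacobi mode is supported in $B_{R_0}$. On $M \setminus B_{R_0}$, the weak CMC stability inequality
\[
\int_M |\nabla\varphi|^2 \;\geq\; \int_M \bigl(|A_M|^2 - n\bigr)\varphi^2
\]
holds for every $\varphi \in C^\infty_c(M \setminus B_{R_0})$ with $\int_M \varphi = 0$ (using $\operatorname{Ric}_{\mathbb{H}^{n+1}}(\nu,\nu) = -n$). Combining with $|A_M|^2 \geq n H_M^2$ and $H_M > 1$ gives the mean-zero Poincar\'e inequality $\int \varphi^2 \leq C \int |\nabla \varphi|^2$ with $C = 1/[n(H_M^2 - 1)]$. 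To remove the mean-zero restriction, I apply the standard pairing trick: fix an auxiliary $\psi \in C^\infty_c(M \setminus B_{R_0})$ supported away from $E_0$ with $\int_M \psi = 1$ and chosen sufficiently spread out so that $\int \psi^2 > C \int |\nabla \psi|^2$ (possible because $M$ is noncompact with infinite volume far from $E_0$). Applying weak stability to the mean-zero combination $\varphi - (\int \varphi)\psi$ and expanding using disjoint supports yields the unconditional Poincar\'e inequality
\[
\int_{E_0} \varphi^2 \;\leq\; C' \int_{E_0} |\nabla \varphi|^2 \qquad \text{for all } \varphi \in C^\infty_c(E_0^\circ),
\]
so that $\lambda_1(E_0^\circ) > 0$.

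A positive bottom of the Dirichlet spectrum on $E_0$ is incompatible with parabolicity. Concretely, let $f_k$ be the harmonic function on $E_k = E_0 \cap B_{R_0+kL}$ with $f_k|_{\partial E_0} = 1$ and $f_k|_{E_0 \cap \partial B_{R_0 + kL}} = 0$; parabolicity of $E_0$ gives $f_k \to 1$ locally uniformly and the capacity $\int_{E_k} |\nabla f_k|^2 = \int_{\partial E_0} \partial_\nu f_k$ tending to $0$. Testing the Poincar\'e inequality against $(1 - f_k)\chi_k$ for a cutoff $\chi_k$ vanishing on the outer boundary of $E_k$, or equivalently invoking the classical fact that a positive Dirichlet spectral gap produces a finite Green's function (via exponential heat-kernel decay) and hence a nonconstant bounded positive harmonic function on $E_0$, contradicts parabolicity.

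The main obstacle is the upgrade in the first step from mean-zero weak stability to a true Poincar\'e inequality on $E_0$. The pairing argument requires the auxiliary $\psi$ to have a large $L^2$-to-gradient ratio, which forces $\psi$ to be supported over a large region; this works here because $M \setminus B_{R_0}$ has infinite volume, leaving ample room. Making the constant $C'$ depend only on $\psi$ and not on the particular $\varphi$ is the essential bookkeeping that completes Step 1.
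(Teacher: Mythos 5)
There is a genuine gap in your second step. The claim that a positive bottom of the Dirichlet spectrum on the end $E_0$ is incompatible with parabolicity is false in general: a hyperbolic cusp $S^1\times[0,\infty)$ with metric $dt^2+e^{-2t}d\theta^2$ is a parabolic end (its boundary has zero capacity, since the harmonic barriers $f_T=(e^T-e^t)/(e^T-1)$ have flux tending to $0$), yet its Dirichlet spectral gap is at least $1/4$. The ``finite Green's function'' you produce from exponential heat-kernel decay is the \emph{Dirichlet} Green's function of the end, which is finite for essentially any end and has nothing to do with nonparabolicity; nonparabolicity of $E_0$ is governed by the capacity of $\partial E_0$ (equivalently the Neumann-type barrier in the definition), not by $\lambda_1(E_0^\circ)$. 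Likewise, your proposed test function $(1-f_k)\chi_k$ tends to $0$ as $k\to\infty$ under the parabolicity hypothesis, so it yields no information. What the Poincar\'e inequality plus parabolicity actually gives — and this is the route the paper takes, by testing with $\varphi f_k$ where $\varphi\equiv 1$ on a large annulus and vanishes near $\partial E_0$ — is that the end has \emph{finite volume}. That alone is not a contradiction (cusps have finite volume); the paper closes the argument with the additional geometric input that every end of a CMC hypersurface in a manifold of bounded geometry has infinite volume. This ingredient is entirely absent from your proof, and without it the argument does not conclude.

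A secondary issue is in your first step. The paper defines finite index via the strong Jacobi operator $\Delta+|A|^2+\operatorname{Ric}(\nu,\nu)$, so outside a compact set one has the \emph{unconstrained} stability inequality and no mean-zero reduction is needed; your detour through weak stability and the pairing trick is superfluous here. Moreover, as written the pairing trick is not justified: you need an auxiliary $\psi$ supported away from $E_0$ (and outside $B_{R_0}$) with $\int\psi^2> C\int|\nabla\psi|^2$, but if $M$ has a single end there is no unbounded region away from $E_0$ in which to ``spread out'' $\psi$, and placing $\psi$ far out in $E_0$ itself would require precisely the spectral information you are trying to establish.
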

	\begin{proof}
		Suppose $M$ is noncompact, otherwise we are done. Since $M$ has finite index, it is well-known that outside a compact subset $\Omega$ of $M$, we have
		\begin{align*}
			\int_{M\setminus \Omega}|\nabla \varphi|^2&\geq \int_{M\setminus \Omega}(|\mathring{A}|^2+nH_M^2-n)\varphi^2\\
			&\geq n(H_M^2-1)\int_{M\setminus \Omega}\varphi^2
		\end{align*}
		for $\varphi\in C_c^\infty(M\setminus \Omega)$. Choose $p\in M$  and $R_0>0$ such that $B_p(R_0)$ contains $\Omega$. Let $\{E_k\}_{k=1}^{\infty}$ be an end with respect to $B_p(R_0)$. 
		
		Fix $k_0>1$. Let $\varphi$ be a cut-off function such that
		\[\varphi=1 \ \text{on}\ E_{k_0+\ell}\setminus E_{k_0},\ \ \varphi=0\ \text{on}\ \partial E_0, \ \text{and}\ |\nabla\varphi|\leq C\]
		where $C$ is independent of $\ell$. Plug $\varphi f_k$ into above inequality, since $f_k$ is harmonic, we obtain by integration of by parts
		\begin{align*}
			(nH_M^2-n)\int_{E_0} (\varphi f_{k})^2 &\leq \int_{E_0}|\nabla(\varphi f_{k})|^2\\
			&=\int_{E_0} |\nabla \varphi|^2 f_{k}^2.
		\end{align*}
		If $\{E_{k}\}_{k=1}^{\infty}$ is parabolic, then $f_{k}\rightarrow 1$ as $k\rightarrow \infty$. Above inequality turns
		\[(nH_M^2-n)|E_{k_0+\ell}\setminus E_{k_0}|\leq C|E_{k_0}| \]
		for any $\ell>0$.  This implies that volume of the end $\{E_k\}$ is finite.
		
		On the other hand, the volume of each end (parabolic or nonparabolic) of a CMC hypersurface in a Riemannian manifold with bounded geometry is infinite. Hyperbolic space has bounded geometry. Thus all ends of $M$ are nonparabolic if there exist ends, completing the proof.
	\end{proof}
	
	As we can see from the proof, a more general nonexistence theorem holds in Riemannian manifolds with bounded geometry.
	
	\begin{lemma}\label{noparabolicendsingeneralmanifold}
		Let $M$ be a complete CMC hypersurface in $(X,g)$ with finite index. Assume that $nH_M^2+\inf_M \operatorname{Ric}_X>0$ and $X$ has bounded geometry. Then $M$ has no parabolic ends.
	\end{lemma}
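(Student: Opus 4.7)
The plan is to mimic the proof of Lemma \ref{noparabolicends} almost verbatim, with the explicit Ricci curvature $-n$ of $\mathbb{H}^{n+1}$ replaced by the lower bound $\inf_M \operatorname{Ric}_X$. Since $M$ has finite index, there is a compact $\Omega \subset M$ such that the stability inequality holds on $M \setminus \Omega$:
\[
\int_{M\setminus\Omega}|\nabla\varphi|^2 \geq \int_{M\setminus\Omega}\bigl(|A|^2+\operatorname{Ric}_X(\nu,\nu)\bigr)\varphi^2,\qquad \varphi\in C_c^\infty(M\setminus\Omega).
\]
Combining $|A|^2\geq nH_M^2$ with the pointwise bound $\operatorname{Ric}_X(\nu,\nu)\geq \inf_M\operatorname{Ric}_X$, the hypothesis yields a positive constant $\epsilon := nH_M^2+\inf_M\operatorname{Ric}_X>0$ for which
\[
\epsilon\int_{M\setminus\Omega}\varphi^2 \leq \int_{M\setminus\Omega}|\nabla\varphi|^2.
\]

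Next I would argue by contradiction: choose a base point $p\in M$ and $R_0>0$ with $B_p(R_0)\supset\Omega$, and let $\{E_k\}_{k=1}^\infty$ be a parabolic end with respect to $B_p(R_0)$, with the harmonic exhaustion $f_k$ satisfying $f_k|_{\partial E_0}=1$ and $f_k|_{E_0\cap\partial B_p(kL+R_0)}=0$. Parabolicity forces $f_k\to 1$ locally uniformly. Taking a standard cutoff $\varphi$ equal to $1$ on $E_{k_0+\ell}\setminus E_{k_0}$ and $0$ on $\partial E_0$ with $|\nabla\varphi|\leq C$, I would test the above Poincar\'e-type inequality with $\varphi f_k$. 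Integration by parts (using $\Delta f_k=0$) gives
\[
\epsilon\int_{E_0}(\varphi f_k)^2 \leq \int_{E_0}|\nabla\varphi|^2 f_k^2.
\]
Sending $k\to\infty$ yields $\epsilon|E_{k_0+\ell}\setminus E_{k_0}|\leq C|E_{k_0}|$, and then letting $\ell\to\infty$ gives that the end $E_0$ has \emph{finite} volume.

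The final step is to contradict this finiteness using the bounded geometry hypothesis on $X$. I expect this to be the main obstacle, since it is the only place the proof genuinely departs from the hyperbolic setting. The point is that for a complete CMC hypersurface sitting inside an ambient manifold with bounded sectional curvature and a positive injectivity radius lower bound, a uniform monotonicity formula (in the spirit of the Simon/Allard monotonicity for surfaces with bounded mean curvature in a Riemannian manifold) produces a uniform lower bound $|B_r^M(q)|\geq c\, r^n$ on sufficiently small intrinsic or extrinsic balls around any point $q\in M$. Applied along a divergent sequence in the end, this forces each end to have infinite volume, contradicting the finite volume obtained above and completing the proof.
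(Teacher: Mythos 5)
Your proposal is correct and follows essentially the same route as the paper, which states this lemma as an immediate consequence of the proof of Lemma \ref{noparabolicends}: the same stability-based Poincar\'e inequality outside a compact set, the same test functions $\varphi f_k$ forcing a parabolic end to have finite volume, and the same contradiction with the fact that ends of a complete CMC hypersurface in a bounded-geometry ambient manifold have infinite volume. Your only addition is to justify that last fact via the monotonicity formula, which the paper simply asserts.
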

	
	Note that a manifold being nonparabolic is weaker than saying that it has no parabolic ends.

	\section{$\mu$-bubble, diameter estimate and volume bound}\label{volumegap}
	In this section, we observe that the theory of the warped $\mu$-bubble used in \cite{chodosh-li-stryker} can yield volume bound for ends of CMC hypersurface in Riemannian manifold with possibly negative  curvature.
	
	Let $M^n$ be a hypersurface immersed in $X^{n+1}$ with the unit normal vector field $\bar{\nu}$ and $\Sigma^{n-1}$ be a hypersurface in $M^n$ the unit normal vector field $\nu$. Let $R_X, R_\Sigma$ be the scalar curvatures of $X, \Sigma$ respectively. Let $\operatorname{Ric}_X, \operatorname{Ric}_M$ denote the Ricci curvature of $X, M$ respectively. Let $A_M$ and $A_\Sigma$ denote the second fundamental form of $M$ in $X$ and $\Sigma$ in $M$ respectively. Suppose $H_M$ and $H_\Sigma$ are the scalar mean curvature of $M$ in $X$ and $\Sigma$ in $M$ respectively, i.e., the average of the trace of $A_M$ and  $A_\Sigma$. 
	
	\begin{definition}
		Given an $(n+1)$-dimensional Riemannian manifold $(X^{n+1},g)$, where $n\geq 2$, and one unit tangent vector $V\in T_pX$ at $p$ , the $k$-weighted scalar curvature in the direction $V$ at $p$ is defined by
		\[\operatorname{k-R_X}(V):=\operatorname{Ric}(e_1,e_1)+\cdots+\operatorname{Ric}(e_{n},e_n)+(2k-1)\operatorname{Ric}(V,V)\]
		where $\{e_1,\cdots,e_n,V\}$ forms an orthonormal basis of $T_pM.$
	\end{definition} 
	When $k=1$, $\operatorname{k-R_X}$ is just the Scalar curvature of $X.$ It can also be written as $\operatorname{k-R_X}(V)=R_X+2(k-1)\operatorname{Ric}(V,V).$
	
	\begin{proposition}\label{curvature relation}
		Under above setting, there hold
		\[R_X-R_\Sigma+n^2H_M^2+(n-1)^2H_\Sigma^2=2\operatorname{Ric}_X(\bar{\nu},\bar{\nu})+2\operatorname{Ric}_M(\nu,\nu)+|A_\Sigma|^2+|A_M|^2\]
		and for any $k\geq \frac{1}{2}$,
		\begin{align*}
			k\operatorname{Ric}_X(\bar{\nu},\bar{\nu})+k|A_M|^2+|A_\Sigma|^2&+\operatorname{Ric}_M(\nu,\nu)-\frac{1}{2}(n-1)^2H_\Sigma^2\\&\geq \frac{1}{2}\inf_M \operatorname{k-R_X} +\frac{n(2k+n-1)}{2}H_M^2-\frac{1}{2}R_\Sigma.
		\end{align*}
	\end{proposition}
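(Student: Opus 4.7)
The plan is to derive both statements from the Gauss equation applied at two nested codimensions, supplemented only by the pointwise inequality $|A_M|^2 \geq nH_M^2$ for the second assertion.

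The identity in the first line is standard. The Gauss equation for $M^n \subset X^{n+1}$ gives
\[
R_M = R_X - 2\operatorname{Ric}_X(\bar{\nu},\bar{\nu}) + (\operatorname{tr} A_M)^2 - |A_M|^2 = R_X - 2\operatorname{Ric}_X(\bar{\nu},\bar{\nu}) + n^2 H_M^2 - |A_M|^2,
\]
using $\operatorname{tr}(A_M) = nH_M$. Applying the same identity to $\Sigma^{n-1} \subset M^n$ yields
\[
R_\Sigma = R_M - 2\operatorname{Ric}_M(\nu,\nu) + (n-1)^2 H_\Sigma^2 - |A_\Sigma|^2.
\]
Eliminating $R_M$ by adding the two equations and rearranging immediately produces the first identity.

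For the inequality, I would plug both Gauss equations into the left-hand side to trade $|A_\Sigma|^2$ and $R_M$ for $R_X$, $R_\Sigma$, $|A_M|^2$, $\operatorname{Ric}_X(\bar{\nu},\bar{\nu})$, and $\operatorname{Ric}_M(\nu,\nu)$. A direct coefficient tally then shows the left-hand side equals
\[
\tfrac{1}{2}R_X + (k-1)\operatorname{Ric}_X(\bar{\nu},\bar{\nu}) + \tfrac{n(2k+n-1)}{2}H_M^2 - \tfrac{1}{2}R_\Sigma + \bigl(k-\tfrac{1}{2}\bigr)\bigl(|A_M|^2 - nH_M^2\bigr) + \tfrac{1}{2}|A_\Sigma|^2.
\]
By the author's definition, the first four terms together are $\tfrac{1}{2}\operatorname{k-R_X}(\bar{\nu}) + \tfrac{n(2k+n-1)}{2}H_M^2 - \tfrac{1}{2}R_\Sigma$, since $\operatorname{k-R_X}(\bar{\nu}) = R_X + 2(k-1)\operatorname{Ric}_X(\bar{\nu},\bar{\nu})$. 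Bounding $\operatorname{k-R_X}(\bar{\nu}) \geq \inf_M \operatorname{k-R_X}$ gives the right-hand side of the claimed inequality. The two leftover terms are nonnegative: Cauchy--Schwarz applied to $\operatorname{tr} A_M = nH_M$ yields $|A_M|^2 \geq nH_M^2$, and $k \geq \tfrac{1}{2}$ makes its coefficient nonnegative, while $|A_\Sigma|^2 \geq 0$ is trivial.

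The main obstacle is pure bookkeeping: tracking coefficients of $R_X$, $R_\Sigma$, $R_M$, $\operatorname{Ric}_X(\bar{\nu},\bar{\nu})$, $\operatorname{Ric}_M(\nu,\nu)$, $H_M^2$, $H_\Sigma^2$, $|A_M|^2$, $|A_\Sigma|^2$ carefully enough that $R_M$ and $\operatorname{Ric}_M(\nu,\nu)$ cancel and the slack collapses cleanly to $(k-\tfrac{1}{2})(|A_M|^2 - nH_M^2) + \tfrac{1}{2}|A_\Sigma|^2$. The conceptual takeaway is that the threshold $k \geq \tfrac{1}{2}$ is precisely the condition for the umbilicity defect of $M$ in $X$ to enter with the correct sign, and the inequality becomes an equality at points where $M$ is umbilic in $X$ and $\Sigma$ is totally geodesic in $M$.
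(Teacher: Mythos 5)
Your proposal is correct and follows essentially the same route as the paper: the identity comes from the traced Gauss equation applied to $M\subset X$ and then to $\Sigma\subset M$, and the inequality comes from substituting that identity into the left-hand side, discarding $\tfrac12|A_\Sigma|^2\ge 0$ and $(k-\tfrac12)(|A_M|^2-nH_M^2)\ge 0$, and bounding $\operatorname{k-R_X}(\bar\nu)\ge\inf_M\operatorname{k-R_X}$. The only cosmetic difference is that you package the slack as an exact remainder, whereas the paper drops the nonnegative terms line by line.
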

	\begin{proof}
		It follows from the definition directly. Choose a local orthonormal frame $\{e_1,\cdots, e_{n+1}\}$ at a point $p$ in $X$ such that $\{e_1,\cdots, e_{n-1}\}$ is a local orthonormal frame for $\Sigma$ and $\nu=e_{n}, \bar{\nu}=e_{n+1}$. Let $h^M_{ij}$ and $h^\Sigma_{ij}$ be the scalar second fundamental form of $A_M$ and $A_\Sigma$ respectively. Let $R^X,R^M, R^\Sigma$ be the curvature tensor of $X, M, \Sigma$ respectively. Thus by the Gauss equation repeatedly,
		\begin{align*}
			R_X-2Ric_X(\bar{\nu},\bar{\nu})&=2\sum_{1\leq i<j\leq n} R^X_{ijij}\\
			&= 2\sum_{1\leq i<j\leq n} (R^M_{ijij}-h^M_{ii}h^M_{jj}+(h^M_{ij})^2)\\
			&=2\sum_{1\leq i<j\leq n-1}R^M_{ijij}+2\sum_{i=1}^{n-1} R^M_{inin}+|A_M|^2-n^2H_M^2\\
			&=2\sum_{1\leq i<j\leq n-1}R^M_{ijij}+2\operatorname{Ric}_M(\nu,\nu)+|A_M|^2-n^2H_M^2\\
			&=R_\Sigma+|A_\Sigma|^2-(n-1)^2H_\Sigma^2+2\operatorname{Ric}_M(\nu,\nu)+|A_M|^2-n^2H_M^2.
		\end{align*}
		Rearranging terms completing the first equation. Furthermore, we have
		\begin{align*}
			\operatorname{Ric}_M(\nu,\nu)+|A_\Sigma|^2&=\frac{1}{2}(R_X-R_\Sigma+n^2H_M^2+(n-1)^2H_\Sigma^2+|A_\Sigma|^2-|A_M|^2)\\&-\operatorname{Ric}_X(\bar{\nu},\bar{\nu}))\\
			&\geq \frac{1}{2}(R_X-2\operatorname{Ric}_X(\bar{\nu},\bar{\nu}))\\
			&+\frac{1}{2}((n-1)^2H_\Sigma^2+n^2H_M^2-|A_M|^2-R_\Sigma)
		\end{align*}
		Then by definition,
		\begin{align*}
			k\operatorname{Ric}_X&(\bar{\nu},\bar{\nu})+k|A_M|^2+|A_\Sigma|^2+\operatorname{Ric}_M(\nu,\nu)\\&\geq (\frac{1}{2}R_X+(k-1)\operatorname{Ric}_X(\bar{\nu},\bar{\nu}))+(k-\frac{1}{2})|A_M|^2\\
			&+\frac{1}{2}n^2H_M^2+\frac{1}{2}(n-1)^2H_\Sigma^2-\frac{1}{2}R_\Sigma\\
			& \geq \frac{1}{2}\inf_M \operatorname{k-R_X} +\left(n(k-\frac{1}{2})+\frac{1}{2}n^2\right)H_M^2+\frac{1}{2}(n-1)^2H_\Sigma^2-\frac{1}{2}R_\Sigma.
		\end{align*}
		This completes the proof of the second equation.
	\end{proof}
	
	\subsection{Closed $\mu$-bubble} We first prove a diameter bound for the warped $\mu$-bubble in ends of CMC hypersurface with finite index. This uses the theory of $\mu$-bubble introduced by M. Gromov and applied to stable minimal hypersurfaces in 4-manifolds by O. Chodosh, C. Li and D. Stryker \cite{chodosh-li-stryker}. 
	\begin{theorem}
		\label{bubblediameterestiamte}
		Let $(X^4,g)$ be a complete 4-dimensional Riemannian manifold and $M^3\subset X^4$ be a stable two-sided CMC hypersurface with  the compact boundary component $\partial M$ and of the mean curvature $H_M$. Let $$\inf_M \operatorname{k-R_X} +(6k+6)H_M^2\geq 2\epsilon_0$$ for a positive constant $\epsilon_0$ and some $k\in[1/2,2]$. Suppose that there exists $p\in M$ such that $d_M(p,\partial M)\geq L/2$ for a positive constant $L>8\pi/\sqrt{\epsilon_0}$. Then there exists a relatively open set $\tilde{\Omega} \subset B^M_{L/2}(\partial M)$ and a smooth surface $\Sigma$ (possibly disconnected) such that $\Sigma\cup \partial M=\partial \tilde{\Omega}$ and the diameter of each component of $\Sigma$ is at most $\frac{2\pi}{\sqrt{\epsilon_0}}$.  Moreover, each component is a sphere and the area of each component is bounded by $8\pi/\epsilon_0.$
		
	\end{theorem}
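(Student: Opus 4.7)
The plan is to adapt the warped $\mu$-bubble strategy (Gromov; Chodosh-Li-Stryker) to this CMC setting, combining it with the ambient stability of $M$ in $X$ and the curvature identity in Proposition \ref{curvature relation}, so as to reduce everything to a 2D eigenvalue inequality on $\Sigma$.

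\emph{Construction of the bubble.} Let $\rho(x)=d_M(x,\partial M)$ (smoothed to be $C^\infty$ on $\{0<\rho<L\}$) and pick a smooth function $h:(0,L/2)\to\mathbb{R}$ with $h(t)\to+\infty$ as $t\to 0^+$ and $h(t)\to-\infty$ as $t\to(L/2)^-$, satisfying an ODE of the form $-2h'\geq h^2+c$ for a constant $c$ to be tuned to $\epsilon_0$ and $H_M$ (e.g.\ a rescaled tangent profile). Minimize
$$\mathcal{A}(\Omega)=\mathcal{H}^2(\partial^*\Omega\cap\operatorname{int}M)-\int_M h(\rho)(\chi_\Omega-\chi_{\Omega_0})\,d\mathcal{H}^3$$
over Caccioppoli sets $\Omega$ differing from a fixed reference $\Omega_0$ (e.g.\ $\{\rho<L/4\}$) only inside $\{0<\rho<L/2\}$; the hypothesis $d_M(p,\partial M)\geq L/2$ ensures this region is nondegenerate, and the blow-up of $h$ at the endpoints forces the minimizer $\tilde\Omega$ to lie in $\{0<\rho<L/2\}\subset B^M_{L/2}(\partial M)$. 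Since $\dim M=3$, $\tilde\Omega$ is smooth; its free boundary $\Sigma=\partial\tilde\Omega\setminus\partial M$ has mean curvature $H_\Sigma=h(\rho)$ and obeys the standard $\mu$-bubble stability
$$\int_\Sigma|\nabla\varphi|^2\;\geq\;\int_\Sigma\bigl(|A_\Sigma|^2+\operatorname{Ric}_M(\nu,\nu)+\langle\nabla\rho,\nu\rangle\,h'(\rho)\bigr)\varphi^2,\quad\varphi\in C^\infty(\Sigma).$$

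\emph{Reduction to a 2D eigenvalue inequality.} Apply the second inequality in Proposition \ref{curvature relation} with the given $k\in[1/2,2]$ to rewrite $|A_\Sigma|^2+\operatorname{Ric}_M(\nu,\nu)$ in terms of $k|A_M|^2+k\operatorname{Ric}_X(\bar\nu,\bar\nu)$, $\tfrac12 R_\Sigma$, $H_\Sigma=h$, $H_M$, and $\operatorname{k-R_X}$. The ambient quantity $k(|A_M|^2+\operatorname{Ric}_X(\bar\nu,\bar\nu))$ is absorbed by inserting into the CMC stability of $M$ a test function of product form $u(x)=\eta(\rho(x))\tilde\varphi(x)$ supported in a thin tubular neighborhood of $\Sigma$, where $\tilde\varphi$ extends $\varphi$ normally and $\eta$ is a 1D weight; the coarea formula in $M$ recovers a bubble integral. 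The ODE for $h$ is chosen precisely so that, after invoking $\inf_M\operatorname{k-R_X}+(6k+6)H_M^2\geq 2\epsilon_0$, the remaining $h^2$, $h'$, and $H_M^2$ contributions combine to yield
$$\int_\Sigma|\nabla\varphi|^2+\tfrac12 R_\Sigma\,\varphi^2\;\geq\;\tfrac{\epsilon_0}{2}\int_\Sigma\varphi^2,\qquad \varphi\in C^\infty(\Sigma).$$

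\emph{Topology, area, and diameter.} Fix a connected component $\Sigma_\alpha$ of $\Sigma$. Plugging $\varphi\equiv 1$ into the above inequality and applying Gauss-Bonnet ($\int_{\Sigma_\alpha}R_\Sigma=4\pi\chi(\Sigma_\alpha)$) gives $\tfrac{\epsilon_0}{2}|\Sigma_\alpha|\leq 2\pi\chi(\Sigma_\alpha)$; so $\chi(\Sigma_\alpha)>0$, i.e.\ $\Sigma_\alpha\cong S^2$, and $|\Sigma_\alpha|\leq 8\pi/\epsilon_0$. The eigenvalue bound $\lambda_1(-\Delta_{\Sigma_\alpha}+\tfrac12 R_{\Sigma_\alpha})\geq\epsilon_0/2$ together with the sphere topology, via the conformal/Kazdan-Warner argument used in the parallel step of Chodosh-Li-Stryker, produces a conformally equivalent metric of positive constant Gauss curvature on $\Sigma_\alpha$, and a Bonnet-Myers comparison transfers the bound $\operatorname{diam}(\Sigma_\alpha)\leq 2\pi/\sqrt{\epsilon_0}$ to the induced metric.

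The delicate part is the middle step: arranging all three ingredients (bubble stability, ambient CMC stability, and Proposition \ref{curvature relation}) so that the ambient $|A_M|^2$, $\operatorname{Ric}_X(\bar\nu,\bar\nu)$, and $H_\Sigma^2$ terms cancel cleanly. The flexibility in $k\in[1/2,2]$ is precisely what is needed to balance these against the $(6k+6)H_M^2$ term in the hypothesis; the concrete choices of the 1D weight $\eta(\rho)$ in the $M$-test function and of the ODE profile $h(\rho)$ must be tuned so that the $\partial_\nu h$, $h^2$, and boundary-weight contributions all collapse into the clean right-hand side $\epsilon_0/2$.
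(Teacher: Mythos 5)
Your overall skeleton (a warped $\mu$-bubble in the band $\{0<\rho<L/2\}$, reduction to a spectral inequality for $-\Delta_\Sigma$ plus Gaussian curvature, then Gauss--Bonnet for the genus and area) matches the paper, but the two steps you yourself flag as delicate are exactly where the argument breaks. First, the absorption of the ambient terms $k(|A_M|^2+\operatorname{Ric}_X(\bar\nu,\bar\nu))$: plugging a product test function $\eta(\rho)\tilde\varphi$ concentrated in a thin tubular neighborhood of $\Sigma$ into the stability inequality of $M$ does not produce a usable inequality on $\Sigma$, because as the neighborhood shrinks the gradient term $\int_M|\eta'|^2\tilde\varphi^2$ blows up like the inverse width while the curvature term only scales like the width; no restricted inequality on $\Sigma$ survives the limit. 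The paper instead converts stability of $M$ into the existence (Fischer--Colbrie--Schoen) of a positive $u$ with $\Delta_M u+(\operatorname{Ric}_X(\bar\nu,\bar\nu)+|A_M|^2)u\le 0$, and -- crucially -- weights the $\mu$-bubble functional by $u^k$, i.e.\ minimizes $\int_{\partial^*\Omega}u^k-\int(\chi_\Omega-\chi_{\Omega_0})u^k h$. The second variation of \emph{that} functional produces the term $-k\,\varphi^2 u^{k-1}\Delta_M u$, which is what lets one insert $k(\operatorname{Ric}_X(\bar\nu,\bar\nu)+|A_M|^2)$ pointwise on $\Sigma$ and then feed Proposition \ref{curvature relation}. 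With your unweighted functional ($H_\Sigma=h$ exactly, no $u$), there is no mechanism to bring the ambient stability onto $\Sigma$, so the middle step has a genuine gap; the first variation also reads $3H_\Sigma=-k D_\nu u/u+h$ in the weighted setting, and the substitution $\varphi=u^{-k/2}\phi$ with a Cauchy--Schwarz absorption (valid precisely for $k\in[1/2,2]$, giving the factor $\tfrac{4-k}{4}$) is what makes the $D_\nu u$ terms close up.

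Second, the diameter bound. A conformal/uniformization argument plus Bonnet--Myers does not transfer a diameter bound back to the induced metric on $\Sigma_\alpha$, since the conformal factor can distort distances arbitrarily; moreover the spectral inequality actually obtained carries the coefficient $\tfrac{4-k}{4}$ in front of $(\tfrac{\epsilon_0}{2}-K_\Sigma)$, which is why the paper notes it cannot quote the Chodosh--Li diameter lemma verbatim. What the paper does is run a \emph{second}, one-dimensional warped $\mu$-bubble argument inside $\Sigma$: take the positive function $v$ with $\Delta v+\tfrac{4-k}{4}(\tfrac{\epsilon_0}{2}-K_\Sigma)v\le 0$, minimize $\int_\gamma v^{b}-\int(\chi_K-\chi_{K_0})h v^{b}$ with $b=\tfrac{4}{4-k}$ over curves separating two points at distance $>2\pi/\sqrt{\epsilon_0}$ (in the paper's computation, $>2\sqrt2\pi/\sqrt{\epsilon_0}$), and derive a contradiction from the second variation of the curve. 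Your Gauss--Bonnet step (taking $\varphi\equiv 1$, concluding $\chi>0$ and $|\Sigma_\alpha|\le 8\pi/\epsilon_0$) is correct and agrees with the paper, but as written the proposal does not establish either the reduction to the 2D inequality or the diameter estimate.
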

	
	\begin{proof}
		This is from the technique of $\mu-$bubble that has been studied and applied in many settings.  Since $M$ is (strongly) stable, we have that for any compactly supported function $\varphi$ on $M$,
		\[\int_M |\nabla \varphi|^2-(\operatorname{Ric}_X(\bar{\nu},\bar{\nu})+|A_M|^2)\varphi^2\geq 0.\]
		It follows that there exists a positive function $u\in C^\infty(M\setminus \partial M)$ such that
		\begin{equation}\label{positive u on m}
			\Delta_M u+(\operatorname{Ric}_X(\bar{\nu},\bar{\nu})+|A_M|^2)u\leq 0.
		\end{equation} 
		When $M$ is compact, the above conclusion is trivial. 
		Otherwise, it can be obtained by following D. Fischer-Colbrie and R. Schoen's paper \cite{Fischer-Colbrie-Schoen-The-structure-of-complete-stable}. 
		The rest of the proof uses the second variation of the warped $\mu$-bubble and diameter estimate (see \cite{chodosh-li-stryker}). 
		
		Let $\rho_0$ be a smoothing of the distance function $d(x,\partial M)$ such that $|\nabla \rho_0|\leq 2$. For a fixed small $\epsilon>0$, define
		\[h(x)=-\frac{4\pi}{L/2-2\epsilon}\tan(\rho)\]
		where $$\rho=\frac{(\rho_0-\epsilon)\pi}{L/2-2\epsilon}-\frac{\pi}{2}.$$
		
		We use functions $u$ and $h$ to construct $\mu$-bubble $\Sigma$ (possibly disconnected) and the domain $\tilde{\Omega}$ enclosed by $\Sigma$ and $\partial M$. Let $\Omega_0=\{x\in M: \epsilon< \rho_0< \frac{L}{4}\}$ and $\Omega_1=\{x\in M: \epsilon<\rho_0\leq \frac{L}{2}-\epsilon \}$. For any $k\in [1/2,2]$, consider
		\[\mathcal{A}(\Omega)=\int_{\partial^{*} \Omega} u^k-\int_{\Omega_1}(\chi_\Omega-\chi_{\Omega_0})u^kh\]
		for Caccioppoli sets $\Omega$ such that $\Omega\Delta\Omega_0$ is compactly supported  in $\Omega_1^\circ.$ There exists a smooth minimizer, still denoted by $\Omega$, of $\mathcal{A}(\cdot)$ since $u^k$ is positive. For simplicity, let $\Sigma$ be one of the component of $\partial\Omega$ contained in $M^\circ.$
		
		The critical surface $\Sigma$ satisfies
		\[3H_\Sigma=-k\frac{D_\nu u}{u}+h.\]
		And the nonnegative second derivative of $\mathcal{A}(\Omega)$ at the critical point yields that for any function $\varphi\in C^\infty(\Sigma),$
		\begin{equation}   \label{secondderivativeofmububble}  
			\begin{aligned}
				\int_\Sigma u^k|\nabla^\Sigma \varphi|^2-ku^{k-1}\varphi^2&\Delta_\Sigma u \geq \int_\Sigma -ku^{k-1}\varphi^2\Delta_M u+(|A_\Sigma|^2+\operatorname{Ric}_M(\nu,\nu))u^k\varphi^2\\
				&+\int_\Sigma \varphi^2u^k D_\nu h+k\varphi^2u^{k-1}hD_\nu u-k(k-1)u^{k-2}\varphi^2(D_\nu u)^2\\
				&=\int_\Sigma (-k\frac{\Delta_M u}{u}+|A_\Sigma|^2+\operatorname{Ric}_M(\nu,\nu)-\frac{9}{2}H_\Sigma^2)u^k\varphi^2\\
				&+\int_\Sigma \frac{1}{2}(h^2+2D_\nu u)u^k\varphi^2+(\frac{1}{2}k^2-k(k-1))u^{k-2}\varphi^2(D_\nu u)^2\\
				&\geq \int_\Sigma (-k\frac{\Delta_M u}{u}+|A_\Sigma|^2+\operatorname{Ric}_M(\nu,\nu)-\frac{9}{2}H_\Sigma^2)u^k\varphi^2\\
				&+\int_\Sigma \frac{1}{2}(h^2+2D_\nu h)u^k\varphi^2.
			\end{aligned}
		\end{equation}
		
		Let $\varphi=u^{-k/2}\phi$, plugging it in the left-hand side of the above equation, by divergence theorem and Cauchy-Schwarz inequality we obtain
		\begin{align*}
			\int_\Sigma u^k|\nabla^\Sigma \varphi|^2-ku^{k-1}\varphi^2\Delta_\Sigma u&\leq \int_\Sigma (1+\frac{k}{2\varepsilon})|\nabla^\Sigma \phi|^2+(k(\frac{\varepsilon}{2}-1)+\frac{k^2}{4})u^{-2}\phi^2|\nabla^\Sigma u|^2\\
			&\leq \frac{4}{4-k}\int_\Sigma |\nabla^\Sigma \phi|^2
		\end{align*}
		by choosing $\varepsilon=2-\frac{k}{2}.$
		
		Hence,
		\begin{align*}
			\frac{4}{4-k}\int_\Sigma |\nabla^\Sigma \phi|^2&\geq \int_\Sigma (-k\frac{\Delta_M u}{u}+|A_\Sigma|^2+\operatorname{Ric}_M(\nu,\nu)-\frac{9}{2}H_\Sigma^2-\frac{\epsilon_0}{2})\phi^2\\
			&+\int_\Sigma \frac{1}{2}(\epsilon_0+h^2+2D_\nu h)\phi^2.
		\end{align*}
		The last term is nonnegative if $L$ satisfies
		\[\epsilon_0+h^2+2D_\nu h\geq \epsilon_0-\left(\frac{4\pi}{L/2-2\epsilon}\right)^2\geq 0,\]
		that is
		\[L\geq 2(\frac{4\pi}{\sqrt{\epsilon_0}}+2\epsilon).\]
		
		Using equation (\ref{positive u on m}) and Proposition \ref{curvature relation}, we have
		
		\begin{align}\label{stabilityinequalityforSigma}
			0\leq & \int_\Sigma |\nabla^\Sigma \phi|^2-\frac{4-k}{4}(|A_\Sigma|^2+\operatorname{Ric}_M(\nu,\nu)-\frac{9H_\Sigma^2}{2}+k\operatorname{Ric}_X(\bar{\nu},\bar{\nu})+k|A_M|^2-\frac{\epsilon_0}{2})\phi^2\\
			\leq & \int_\Sigma |\nabla^\Sigma \phi|^2-\frac{4-k}{4}(\frac{1}{2}\inf_M \operatorname{k-R_X} +(3k+3)H_M^2-\frac{\epsilon_0}{2}-\frac{R_\Sigma}{2})\phi^2\nonumber\\
			\leq &\int_\Sigma |\nabla^\Sigma \phi|^2-\frac{4-k}{4}(\frac{\epsilon_0}{2}-\frac{R_\Sigma}{2})\phi^2\nonumber 
		\end{align}
		for any $\phi\in C^\infty(\Sigma)$.
		Thus there exists a positive function $v\in C^\infty(\Sigma)$ such that 
		\[\Delta v+\frac{4-k}{4}(\frac{\epsilon_0}{2}-K_\Sigma)v\leq 0.\]
		
		We can not directly apply the diameter estimate in  \cite[Lemma 16]{chodosh-li-bubble} due to the coefficient in front of $K_\Sigma.$ However, we can still follow the idea of \cite[Lemma 16]{chodosh-li-bubble} to show
		\[\operatorname{diam}(\Sigma)\leq \frac{2\pi}{\sqrt{\epsilon_0}}.\]
		
		In fact, assume that there exists  $p,q\in M$ with $d(p,q)>\frac{2\sqrt{2}\pi}{\sqrt{\epsilon_0}}$. Following \cite[Lemma 16]{chodosh-li-bubble}, there is $h>0$ such that $\frac{\epsilon_0}{2}+\frac{1}{2}h^2+D^\Sigma_\nu h>0$. Let $b=\frac{4}{4-k}$. Consider
		\[\mathcal{A}(K)=\int_\gamma v^b-\int_{K}(\chi_K-\chi_{K_0})hv^b\]
		where $K_0$ is the reference Caccioppoli set $B_\epsilon(p)\subset K_0\subset \Sigma\setminus B_\epsilon(q).$
		The minimizing closed $\mu$-curve $\gamma$ exists in $\Sigma$. By conducting similar calculation as in \eqref{secondderivativeofmububble} to $\gamma$ we have 
		\begin{align*}
			\int_\gamma v^b|\nabla^\gamma\varphi|-bv^{b-1}\varphi^2\Delta_\gamma v&\geq \int_\gamma(-b\frac{\Delta_\Sigma v}{v}+\frac{1}{2}\kappa_\gamma^2+K_\Sigma)v^b\varphi^2+\frac{b}{2}(2-b)u^{b-2}\varphi^2(D_\nu v)^2\\&+\int_\gamma(\frac{1}{2}h^2+D_\nu h)v^b\varphi^2\\
			&\geq \int_\gamma(\frac{\epsilon_0}{2}+\frac{1}{2}h^2+D_\nu h)v^b\varphi^2+\int_\gamma \frac{b}{2}(2-b)u^{b-2}\varphi^2(D_\nu v)^2.
		\end{align*}
		Plugging $\varphi=v^{\frac{2}{k-4}}$, we conclude that
		\begin{align*}
			\int_\gamma \frac{4(k-3)}{(4-k)^2}v^{-2}|\nabla^\gamma v|^2\geq \int_\gamma (\frac{\epsilon_0}{2}+\frac{1}{2}h^2+D_\nu h)+\int_\gamma \frac{8-4k}{(4-k)^2}v^{-2}(D^\Sigma_\nu v)^2
		\end{align*}
		leading to a contradiction for any $k\in [1/2,2]$.
		
		Lastly, by Gauss-Bonnet formula we have
		\[\frac{\epsilon_0}{2}|\Sigma|\leq \int_\Sigma K_\Sigma=2\pi(2-2g),\]
		so the genus is zero and $|\Sigma|\leq 8\pi/\epsilon_0.$
		
		Let $\tilde{\Omega}=\Omega \cup \{\rho_0\leq \epsilon\}.$  We complete the proof.
		
	\end{proof}

	\subsection{A volume inequality and topological lemmas}
	In this section, we prove a volume inequality and prepare some topological lemmas that will be essential in the proof of the main theorem.
	
	\begin{lemma}\label{poincare inequality}
		Let $(X^4,g)$ be a complete 4-dimensional Riemannian manifold with bounded geometry and let $M$ be a noncompact CMC hypersurface in $X$ with finite index. If $3H_M^2+\inf_M\operatorname{Ric}_X\geq\delta$ for some constant $\delta>0$, then there exists a constant $C=C(\delta, M)>0$  such that
		\begin{equation}\label{poincare inequality eq}
			\int_M\varphi^2\leq C\int_M  |\nabla\varphi|^2
		\end{equation}
		for all $\varphi\in C^1_c(M).$
	\end{lemma}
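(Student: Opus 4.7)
The strategy is to first prove a Poincar\'e inequality on the complement of a large compact subset from the finite-index hypothesis, and then extend it to all of $M$ by a cutoff argument combined with a Rellich--Kondrachov compactness argument on the leftover compact piece.

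Since $M$ has finite index there is a compact $K \subset M$ outside of which $M$ is strongly stable, as was already exploited in the proof of Lemma \ref{noparabolicends}. Because $\dim M = 3$, Cauchy--Schwarz applied to the principal curvatures gives $|A_M|^2 \geq 3H_M^2$, so the stability inequality combined with the hypothesis $3H_M^2 + \inf_M \operatorname{Ric}_X \geq \delta$ yields
\begin{equation*}
\int_{M\setminus K}|\nabla \psi|^2 \;\geq\; \int_{M\setminus K}\bigl(|A_M|^2 + \operatorname{Ric}_X(\nu,\nu)\bigr)\psi^2 \;\geq\; \delta \int_{M\setminus K}\psi^2
\end{equation*}
for all $\psi \in C_c^\infty(M\setminus K)$. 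Now fix a relatively compact $\tilde K$ with $K \subset \tilde K^\circ$ and a cutoff $\eta \in C^\infty(M)$ with $\eta \equiv 0$ on $K$, $\eta \equiv 1$ on $M \setminus \tilde K$ and $|\nabla \eta| \leq C_0$. Plugging $\eta\varphi$ (for $\varphi \in C_c^1(M)$) into the local inequality above and expanding $|\nabla(\eta\varphi)|^2 \leq 2|\nabla\varphi|^2 + 2C_0^2 \varphi^2 \mathbf{1}_{\tilde K}$ gives
\begin{equation*}
\int_{M\setminus \tilde K}\varphi^2 \;\leq\; \frac{2}{\delta}\int_M |\nabla\varphi|^2 \;+\; \frac{2C_0^2}{\delta}\int_{\tilde K}\varphi^2,
\end{equation*}
which I will call $(\star)$.

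It remains to bound $\int_{\tilde K}\varphi^2$ by $\int_M|\nabla\varphi|^2$ up to a constant depending only on $\tilde K$, $\delta$ and $C_0$. I would argue by contradiction: if no such constant existed, there would be a sequence $\{\varphi_k\} \subset C_c^1(M)$ with $\int_{\tilde K}\varphi_k^2 = 1$ and $\int_M|\nabla\varphi_k|^2 \to 0$. By $(\star)$, $\int_M \varphi_k^2$ is then uniformly bounded, so $\{\varphi_k\}$ is bounded in $W^{1,2}_{\mathrm{loc}}(M)$. Rellich--Kondrachov produces a subsequence converging in $L^2_{\mathrm{loc}}$ to some $\varphi$, and $|\nabla\varphi|^2 = 0$ forces $\varphi \equiv c$ constant on connected $M$; the normalization $c^2|\tilde K| = 1$ gives $c \ne 0$. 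But then $c^2|K'| \leq \liminf_k \int_{K'}\varphi_k^2 \leq \sup_k \int_M \varphi_k^2 < \infty$ for every compact $K' \subset M$, and since each end of a complete CMC hypersurface in a bounded-geometry ambient has infinite volume (precisely the fact invoked at the end of the proof of Lemma \ref{noparabolicends}), $|K'|$ can be taken arbitrarily large, a contradiction.

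Combining this bound with $(\star)$ closes the argument. The main obstacle is exactly that last step: stability carries no information on the compact piece $\tilde K$, and one must exploit the noncompactness together with the infinite volume of the ends to transfer the gradient control onto $\tilde K$ via Rellich compactness.
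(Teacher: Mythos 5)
Your argument is correct, and it diverges from the paper's at the one step that actually requires an idea. The opening move is the same as the paper's: finite index gives a compact set outside of which the stability inequality (together with $|A_M|^2\geq 3H_M^2$) yields the spectral gap $\delta$, and a cutoff reduces everything to controlling $\int_{\tilde K}\varphi^2$ by the global Dirichlet energy. For that remaining step the paper splits $\varphi$ on the enlarged compact set into its mean plus oscillation, handles the oscillation with the Neumann--Poincar\'e inequality there, and bounds the mean-value term $\left|\int_{\tilde\Omega}\varphi\right|\leq C\left(\int_M|\nabla\varphi|^2\right)^{1/2}$ by invoking the nonparabolicity of $M$ (supplied by Lemma \ref{noparabolicendsingeneralmanifold} and used through Carron's characterization of nonparabolic manifolds). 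You instead run a Rellich--Kondrachov contradiction argument: a normalized sequence with vanishing energy would converge in $L^2_{\mathrm{loc}}$ to a nonzero constant, forcing $|M|<\infty$, which contradicts the infinite volume of the ends --- the same fact the paper already uses at the end of the proof of Lemma \ref{noparabolicends}, so you are entitled to it. Your route is softer and avoids the potential-theoretic input entirely (no Green's function, no appeal to Carron), at the price of being a pure compactness argument; both proofs produce a constant depending on $M$ through the choice of compact set, so nothing quantitative is lost either way. The one point that must be checked, and which you do check, is that the normalization $\int_{\tilde K}\varphi_k^2=1$ survives the limit: strong $L^2(\tilde K)$ convergence forces $c\neq 0$, which is exactly what rules out the trivial limit and makes the infinite-volume contradiction bite.
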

	\begin{proof}
		Since $M$ has finite index, there exists a compact subset $\Omega\subset M$ such that $M\setminus \Omega$ is stable. This means that for any $\varphi\in C_c^1(M\setminus \Omega)$, we have the stability inequality:
		\begin{align*}
			\int_M|\nabla \varphi|^2&\geq \int_M (|A_M|^2+\operatorname{Ric}_X(\nu,\nu))\varphi^2\\
			&\geq \int_M(3H_M^2+\operatorname{Ric}_X(\nu,\nu))\varphi^2\\
			&\geq \delta \int_M \varphi^2.
		\end{align*}
		
		We now use the Poincar\'e inequality and the nonparabolicity of $M$ to extend above inequality to $M.$ Let $\tilde{\Omega}$ be a slightly larger compact set and let $\theta$ be a smooth cutoff function  that is $1$ on $\Omega$ and $0$ in $M\setminus \tilde{\Omega}$, moreover, we can assume that $|\nabla\theta|\leq 1$. Let $\varphi\in C_c^1(M)$, note that $(1-\theta)\varphi$ is compactly supported in $M\setminus \Omega$. Denote the constant $\int_{\tilde{\Omega}}\varphi/|\tilde{\Omega}|$ by $\tilde{\varphi}$. Then
		\begin{align*}
			\|\varphi\|_{L^2(M)}&\leq \|(1-\theta)\varphi\|_{L^2(M)}+\|\theta(\varphi-\tilde{\varphi})\|_{L^2(M)}+\|\theta\tilde{\varphi}\|_{L^2(M)}\\
			&\leq \frac{1}{\delta}\|\nabla ((1-\theta)\varphi)\|_{L^2(M\setminus \Omega)} +\|\varphi-\tilde{\varphi}\|_{L^2(\tilde{\Omega})}+\|\tilde{\varphi}\|_{L^2(\tilde{\Omega})}.
		\end{align*}
		The first term becomes
		\begin{align*}
			\|\nabla ((1-\theta)\varphi)\|_{L^2(M\setminus \Omega)} &\leq \|\varphi\|_{L^2(\tilde{\Omega})}+\|\nabla\varphi\|_{L^2(M)}\\
			&\leq \|\varphi-\tilde{\varphi}\|_{L^2(\tilde{\Omega})}+\|\tilde{\varphi}\|_{L^2(\tilde{\Omega})}+\|\nabla\varphi\|_{L^2(M)}.
		\end{align*}
		By Poincar\'e inequality on $\tilde{\Omega}$, we have
		\[\|\varphi-\tilde{\varphi}\|_{L^2(\tilde{\Omega})}\leq C\|\nabla \varphi\|_{L^2(\tilde{\Omega})}.\]
		Thus we obtain
		\[\|\varphi\|_{L^2(M)}\leq C(\|\nabla \varphi\|_{L^2(M)}+\|\tilde{\varphi}\|_{L^2(\tilde{\Omega})})\]
		where $C$ is a constant depending on $\delta$, $\Omega$ and $\tilde{\Omega}.$
		
		We now estimate $\|\tilde{\varphi}\|_{L^2(\tilde{\Omega})}$. First we have that
		\[\|\tilde{\varphi}\|_{L^2(\tilde{\Omega})}\leq C\left|\int_{\tilde{\Omega}}\varphi\right|.\]
		Notice that $\varphi$ is not necessarily compactly supported in $\tilde{\Omega}$. By Lemma \ref{noparabolicendsingeneralmanifold}, we know that $M$ is nonparabolic. Then it follows from the equivalent definitions of nonparabolicity (see \cite[Definition 2.13]{Carron-L2-harmonic}) that there exists a constant $C>0$ such that
		\[\left|\int_{\tilde{\Omega}}\varphi\right|\leq C\left(\int_M|\nabla\varphi|^2\right)^{\frac{1}{2}}.\]
		
		In conclusion, we have
		\[\|\varphi\|_{L^2(M)}\leq C \|\nabla \varphi\|_{L^2(M)}\]
		completing the proof of the lemma.
		
	\end{proof}
	
	Thus we have the following volume inequality.
	\begin{corollary}\label{areavolumeinequality}
		Let $(X^4,g)$ be a complete 4-dimensional Riemannian manifold with bounded geometry and let $M$ be a noncompact CMC hypersurface in $X$ with finite index. Assume that $3H_M^2+\inf_M\operatorname{Ric}_X\geq\delta$ for a positive constant $\delta$, then there exists a constant $C>0$ such that
		\[ |\Omega|\leq \frac{C}{\tau^2}|\Omega_\tau\setminus \Omega|\]
		for any compact subset $\Omega\subset M$ and $\tau>0$. Here, $\Omega_\tau=\{x\in M:\operatorname{dist}_M(x,\Omega)\leq \tau\}.$
	\end{corollary}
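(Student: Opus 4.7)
The plan is to deduce this volume inequality as a direct consequence of the Poincar\'e-type inequality established in Lemma \ref{poincare inequality}, by plugging in a specific cutoff function built from the distance to $\Omega$.

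First I would fix a compact subset $\Omega \subset M$ and $\tau>0$, and introduce the distance function $d(x) = \operatorname{dist}_M(x,\Omega)$. I would then define the tent function
\[
\varphi(x) = \begin{cases} 1 & x \in \Omega, \\ 1 - \tau^{-1} d(x) & x \in \Omega_\tau \setminus \Omega, \\ 0 & x \in M \setminus \Omega_\tau. \end{cases}
\]
This $\varphi$ is Lipschitz with compact support contained in $\Omega_\tau$, and satisfies $|\nabla \varphi| \leq 1/\tau$ almost everywhere, with the gradient supported in $\Omega_\tau \setminus \Omega$. Since the Poincar\'e inequality in Lemma \ref{poincare inequality} is stated for $C_c^1$ functions, I would approximate $\varphi$ by a standard mollification argument (or by noting that Lemma \ref{poincare inequality} extends to Lipschitz compactly supported functions by density) to justify applying \eqref{poincare inequality eq} to $\varphi$.

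Then the estimate is immediate: the left-hand side satisfies
\[
\int_M \varphi^2 \geq \int_\Omega 1 = |\Omega|,
\]
while the right-hand side is bounded by
\[
C \int_M |\nabla \varphi|^2 = C \int_{\Omega_\tau \setminus \Omega} |\nabla \varphi|^2 \leq \frac{C}{\tau^2} |\Omega_\tau \setminus \Omega|.
\]
Combining these two inequalities yields the desired estimate $|\Omega| \leq \frac{C}{\tau^2} |\Omega_\tau \setminus \Omega|$.

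There is no serious obstacle here; the content is entirely in Lemma \ref{poincare inequality}, and this corollary is merely the standard reformulation of a global Poincar\'e inequality as a volume comparison between a set and its tubular enlargement. The only mild technical point is the density argument used to apply the $C_c^1$ Poincar\'e inequality to the Lipschitz tent function, which is routine.
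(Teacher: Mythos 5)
Your proof is correct and is essentially identical to the paper's: the paper also plugs the same tent cutoff function built from $\operatorname{dist}_M(\cdot,\Omega)$ into the Poincar\'e inequality of Lemma \ref{poincare inequality} and reads off the volume bound. The only difference is that you spell out the routine density step for Lipschitz functions, which the paper leaves implicit.
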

	
	\begin{proof}
		The result follows by plugging the following function into \eqref{poincare inequality eq}.
		\[f(x)=\begin{cases}
			1, & x\in \Omega\\
			\frac{\tau-d_M(x,\partial\Omega)}{\tau}, & x\in \Omega_\tau\setminus \Omega\\
			0, & \text{otherwise}.
		\end{cases}\]
	\end{proof}
	
	\begin{lemma}\label{curvature estimate}
		Let $(X^4,g)$ be a complete $4$-dimensional Riemannian manifold with bounded geometry and let $\Omega$ be a stable compact CMC hypersurface with compact boundary and the mean curvature $H$ in $X.$ Then,
		\[\sup_{x\in \Omega}|A_\Omega|(x)\min\{1,d_\Omega(x,\partial \Omega)\}\leq C\]
		for a constant $C=C(X,g,H)$. In particular, for any noncompact complete stable CMC hypersurface $E_0$ with compact boundary in $X,$ the second fundamental form $|A_{E_0}|$ is uniformly bounded.
	\end{lemma}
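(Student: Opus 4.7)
The plan is to prove the estimate by contradiction via a standard blow-up argument, using the Chodosh--Li stable Bernstein theorem in $\mathbb{R}^4$ to rule out the resulting tangent limit.

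First, suppose the estimate fails: there exists a sequence of compact stable CMC hypersurfaces $\Omega_i \subset X$, each of mean curvature $H$, with points $x_i \in \Omega_i$ satisfying $|A_{\Omega_i}|(x_i)\,r_i \to \infty$ for $r_i = \min\{1,\,d_{\Omega_i}(x_i,\partial\Omega_i)\}$. Applying Simon's point-picking lemma on the intrinsic ball of radius $r_i/2$ around $x_i$, I replace $x_i$ by new points on which $|A|$ is nearly maximal in a controlled neighborhood: after relabeling, $\lambda_i := |A_{\Omega_i}|(x_i) \to \infty$, one has $|A_{\Omega_i}| \leq 2\lambda_i$ on a suitable intrinsic ball around $x_i$, and the $\lambda_i$-rescaled radius of that ball tends to infinity.

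Next, I rescale the ambient metric by $\lambda_i$. Bounded geometry of $(X,g)$ guarantees that the rescaled ambient metrics converge smoothly on compact subsets to the flat metric on $\mathbb{R}^4$. The rescaled mean curvature is $H/\lambda_i \to 0$, and the rescaled second fundamental forms are uniformly bounded with value $1$ at the base point, so the hypersurfaces admit uniform local graphical representations with $C^{2,\alpha}$ bounds and a subsequential smooth limit $\Sigma \subset \mathbb{R}^4$ can be extracted, a complete two-sided minimal hypersurface with $|A_\Sigma|(0)=1$. Stability is scale-invariant and passes to the limit: any $\varphi \in C^\infty_c(\Sigma)$ is approximated by test functions on the rescaled $\Omega_i$, and if stability is only in the weak CMC sense (mean-zero test functions), the constraint becomes vacuous in the limit by subtracting a small bump supported far from $\operatorname{supp}\varphi$, whose contribution to the second variation vanishes because $H/\lambda_i \to 0$. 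Thus $\Sigma$ is a complete two-sided stable minimal hypersurface in $\mathbb{R}^4$; by the Chodosh--Li theorem it must be a hyperplane, contradicting $|A_\Sigma|(0)=1$.

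Finally, for the ``in particular'' statement, for $x \in E_0$ with $d_{E_0}(x,\partial E_0) \geq 1$, applying the main estimate to a compact stable CMC subdomain of $E_0$ (obtained by smoothing an intrinsic ball of radius $\geq 2$ around $x$, which inherits stability from $E_0$) gives $|A_{E_0}|(x) \leq C$. The remaining neighborhood $\{d_{E_0}(\cdot,\partial E_0)\leq 1\}$ is closed and bounded in the complete manifold $E_0$, hence compact by Hopf--Rinow, so $|A_{E_0}|$ is automatically bounded there by smoothness up to the compact boundary $\partial E_0$. Combining the two bounds yields a uniform global bound on $|A_{E_0}|$. I expect the main obstacle to be verifying that stability passes to the blow-up limit in a form strong enough to invoke Chodosh--Li, since CMC stability may only hold for mean-zero test functions; this is the delicate step, resolved by the bump-function trick together with the vanishing of the rescaled mean curvature.
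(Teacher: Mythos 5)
Your proposal is correct and follows essentially the same route as the paper: a point-picking/blow-up argument in which the rescaled mean curvature $H/\lambda_i \to 0$, bounded geometry forces the rescaled ambient metrics to flatten to $\mathbb{R}^4$, and the stable Bernstein theorem in $\mathbb{R}^4$ rules out the nonflat minimal limit. Note only that in this paper ``stable'' for CMC hypersurfaces means the strong (unconstrained) stability inherited from finite index, so your weak-stability contingency --- whose bump-function justification is shaky as stated, since the bump's contribution to the second variation involves $|\nabla\psi|^2-|A|^2\psi^2$ and does not vanish merely because $H/\lambda_i\to 0$ --- is not needed here.
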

	\begin{proof}
		When $M$ is complete ($\partial\Omega=\emptyset$), this has been shown by H. Rosenberg, R. Souam and E. Toubiana \cite{rosenbergcurvatureestimate} in dimension $2$. If $M$ is $3$-dimensional minimal immersion, this was proven by O. Chodosh, C. Li and D. Stryker \cite{chodosh-li-stryker}. The proof of the claim follows in a similar manner as in \cite{chodosh-li-stryker} by using the standard point-picking argument. If the claim is not true, there exists a sequence of CMC immersions $\Omega_i$ with mean curvature $H$ and a sequence of points $p_i\in\Omega_i\setminus \partial\Omega_i$ such that $|A_{\Omega_i}|(p_i)\min\{1,d_{\Omega_i}(p_i,\partial\Omega_i)\}\rightarrow \infty$ as $i\rightarrow \infty$. This means that $|A_{\Omega_i}|(p_i)\rightarrow \infty$. The next step is to dilate the metric by $|A_{\Omega_i}|(p_i)$ and study the convergence of $\Omega_i$. Note that $|A|$ at $p_i$ under the new metric is one. The rest of proof follows as in \cite{chodosh-li-stryker}. The only difference in our case is that we are dealing with CMC hypersurface with nonzero mean curvature. However, notice that the dilations by $|A_{\Omega_i}|$ of CMC sequence will be complete minimal hypersurface. Hence, again we can use stable Bernstein theorem for minimal hypersurfaces in $\mathbb{R}^4$ to obtain contradiction in the point-picking argument. 
	\end{proof}

	The following topological result provides an alternative assumption for the main theorem.
	\begin{lemma}{\cite[Proposition 2.14]{Carron-L2-harmonic}}\label{carron's lemma}
		Let $M$ be a complete noncompact Riemannian manifold. Assume that all ends of $M$ are nonparabolic,
		then
		\[{0}\rightarrow H_0^1(M)\rightarrow H_2^1(M)\]
		where $H_0^1(M)$ denotes the first cohomology group with compact support of $M$ and $H_2^1(M)$ denotes the space of $L^2$-harmonic one-form on $M.$ Moreover, $$\operatorname{dim}(H_2^1( M))\geq \text{the number of ends of} \ M.$$
	\end{lemma}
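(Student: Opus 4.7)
The plan is to follow the standard Hodge-theoretic argument for noncompact manifolds, as in Carron's original paper, and carry it out in three steps.

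First, given a closed compactly supported 1-form $\alpha$ representing a class in $H_0^1(M)$, I would produce an $L^2$-harmonic representative. The codifferential $d^*\alpha$ is compactly supported. Since every end of $M$ is nonparabolic, $M$ itself is nonparabolic and admits a positive symmetric Green's function with enough decay to solve $\Delta u = d^*\alpha$ with $du\in L^2(M)$. Setting $\omega := \alpha - du$ gives $d\omega = d\alpha = 0$, $d^*\omega = d^*\alpha - \Delta u = 0$, and $\omega\in L^2(M)$ because $\alpha$ is compactly supported and $du\in L^2$. Thus $\omega$ is an $L^2$-harmonic 1-form on $M$.

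Second, I would show that the assignment $[\alpha]\mapsto \omega$ descends to an injective map $H_0^1(M)\hookrightarrow H_2^1(M)$. Well-definedness: if $\alpha = d\varphi$ with $\varphi$ compactly supported, choose $u=\varphi$ to get $\omega=0$. Injectivity: suppose $\omega = 0$, so $\alpha = du$ globally with $du\in L^2$. Outside the compact support of $\alpha$ the function $u$ is harmonic with finite Dirichlet energy. Invoking the Li-Tam theory of harmonic functions on nonparabolic ends (cf.\ Lemma \ref{noparabolicends} and the discussion of the approximating sequence $\{f_k\}$ in the Preliminary), on each end $u$ tends to a limiting constant $c_i$. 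Subtracting the locally constant function $c$ equal to $c_i$ on $E_i$ produces a compactly supported primitive for $\alpha$, so $[\alpha] = 0$ in $H_0^1(M)$.

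Third, for the dimension lower bound, I would use the same Li-Tam construction productively. For each nonparabolic end $E_i$ of $M$, the Dirichlet sequence $f_k$ converges to a nonconstant bounded harmonic function $h_i$ on $E_i$, which extends across the compact interior to a harmonic function with $dh_i\in L^2(M)$. The forms $dh_1,\ldots, dh_k$ (or an independent subset obtained after modding out by an obvious linear relation and re-inserting the class coming from $H_0^1(M)$) are linearly independent $L^2$-harmonic 1-forms, yielding $\dim H_2^1(M)\geq$ (number of ends).

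The main technical obstacle is the injectivity step: one must control the boundary behavior of $u$ at infinity on each nonparabolic end and show that a finite-energy harmonic function there truly stabilizes to a limit, so that subtracting the limit yields a compactly supported primitive. This is where the nonparabolicity hypothesis (rather than just nonparabolicity of $M$ as a whole) is genuinely used.
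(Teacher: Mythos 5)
First, a point of reference: the paper does not prove this lemma at all --- it is quoted directly as Proposition 2.14 of Carron's survey, so there is no in-paper argument to compare against, and your sketch must stand on its own. Its overall architecture (produce an $L^2$-harmonic representative via the Green's function, then invoke Li--Tam theory of nonparabolic ends) is indeed the standard route. However, the injectivity step, which you yourself flag as the crux, is wrong as written. If $\omega=0$ then $du=\alpha$ is compactly supported, so outside $\operatorname{supp}\alpha$ the function $u$ is not merely ``harmonic with finite Dirichlet energy'': it is locally constant, equal to some constant $c_i$ on each end $E_i$. If the $c_i$ are not all equal, ``subtracting the locally constant function $c$'' does not produce a primitive of $\alpha$: any global extension of $c$ has $dc\not\equiv 0$ on the transition region, so $d(u-c)=\alpha-dc\neq\alpha$, and $[\alpha]=[dc]$ is precisely one of the nontrivial classes of $H_0^1(M)$ dual to a hypersurface separating two ends. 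Taken at face value your argument would kill exactly these classes, contradicting the second half of the lemma, since their harmonic images are what realize the dimension bound. What is actually needed is that the specific potential $u=G*d^*\alpha$ tends to zero at infinity of \emph{every} end, forcing all $c_i=0$; this is where nonparabolicity of each individual end genuinely enters (equivalently, if some $c_i\neq 0$ one builds cutoffs equal to $1$ near $\partial E_i$ with Dirichlet energy tending to zero, contradicting the positivity of the capacity of the nonparabolic end $E_i$; on a parabolic end $G(\cdot,p)$ stays bounded away from zero and the conclusion fails).

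The dimension count also has a gap. The Li--Tam functions are constructed globally, not end by end: one solves $\Delta g_k=0$ on $B_{R_k}$ with boundary value $1$ on $\partial B_{R_k}\cap E_i$ and $0$ on the rest of $\partial B_{R_k}$, and the limits $g^{(1)},\dots,g^{(\ell)}$ are bounded harmonic functions on all of $M$ with finite Dirichlet integral satisfying $\sum_i g^{(i)}=1$; a harmonic function defined only on an end with data on $\partial E_i$ does not simply ``extend across the compact interior'' to a global harmonic function. This construction yields only $\ell-1$ linearly independent $L^2$-harmonic one-forms $dg^{(i)}$, and the extra class you hope to ``re-insert'' from $H_0^1(M)$ is not available in general: for $M=\mathbb{R}^3$, which has a single nonparabolic end, $H_2^1(M)=0$ and $H_0^1(M)=0$, so the inequality $\dim H_2^1(M)\geq \#\{\text{ends}\}$ as quoted is off by one. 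The provable (and, for the paper's purposes, sufficient) bound is $\dim H_2^1(M)\geq \#\{\text{ends}\}-1$; you should prove that and not chase the stated one.
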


	If we assume that $\operatorname{dim}(H_2^1( M))$ is finite, then $M$ has finite number of ends and $\operatorname{dim}(H_0^1(M))$ is finite. Hereafter, we denote the number of ends of $M$ by $\ell.$
	The following result will be crucial in the main proof.
	\begin{lemma}[\cite{chodoshliR4}]\label{boundary}
		Suppose that $M$ has $\ell$ ends and $H^{1}_{0}(M;\mathbb{R})$ has finite dimension.
		Consider an exhaustion $\Omega_{1}\subset \Omega_{2}\subset\cdots\subset M$ by pre-compact regions with smooth boundary. If each component of $M\backslash\Omega_{i}$ is unbounded. Then for $i$ sufficiently large, $\partial \Omega_{i}$ has $\ell$ components. 
	\end{lemma}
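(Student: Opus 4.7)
The plan is to analyze the topology of $M \setminus \Omega_i$ via long exact sequences, exploiting the hypothesis that $H^1_0(M)$ has finite dimension to bound the number of boundary components of $\Omega_i$ from above. First I would observe that for $i$ sufficiently large, the complement $M \setminus \Omega_i$ has exactly $\ell$ connected components $U_1,\ldots,U_\ell$, each an unbounded neighborhood of a distinct end of $M$. This follows from the definition of ends combined with the standing hypothesis that every component of $M \setminus \Omega_i$ is unbounded: each component necessarily contains at least one end, and for large $i$ distinct components correspond to distinct ends. Since each $U_j$ has non-empty boundary lying in $\partial \Omega_i$ and these boundaries are disjoint, the lower bound $m_i := |\pi_0(\partial \Omega_i)| \geq \ell$ is immediate.

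For the reverse inequality I would invoke the long exact sequence of the pair $(\Omega_i, \partial \Omega_i)$ together with the excision identification $H^1(\Omega_i,\partial \Omega_i) \cong H^1_c(\Omega_i^\circ)$. For $i$ large, $\Omega_i$ is connected, so the map $\mathbb{R}=H^0(\Omega_i)\to H^0(\partial \Omega_i)=\mathbb{R}^{m_i}$ is the diagonal and its cokernel $\mathbb{R}^{m_i-1}$ injects into $H^1_c(\Omega_i^\circ)$. Composing with the extension-by-zero map $H^1_c(\Omega_i^\circ)\to H^1_c(M)=H^1_0(M)$, I would verify injectivity directly: the class produced by boundary data $(a_k)$ is represented by $df$ for a smooth $f$ on $\Omega_i$ equal to $a_k$ near the $k$-th boundary component; if $df=d\phi$ for some $\phi\in C_c^\infty(M)$, then $\phi=f+c_0$ on the interior and $\phi$ is locally constant on each $U_j$. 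Compact support and the unboundedness of each $U_j$ force these constants to vanish, and continuity across $\partial \Omega_i$ then forces all $a_k$ to coincide. This yields a uniform bound $m_i - 1 \leq \dim H^1_0(M) < \infty$.

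The main obstacle is to sharpen this bound to the equality $m_i = \ell$. For this I would exploit both the long exact sequence of $(M,M\setminus \Omega_i^\circ)$, which produces an ``end subspace'' $\mathbb{R}^{\ell-1} \subset H^1_c(\Omega_i^\circ)$ coming from the $\ell$ components of $M\setminus\Omega_i^\circ$, and the stabilization of the direct limit $H^1_0(M)=\varinjlim_i H^1_c(\Omega_i^\circ)$ afforded by finite dimensionality. The plan is to argue that for $i$ sufficiently large, the image of $\mathbb{R}^{m_i-1}$ inside $H^1_0(M)$ coincides with the image of the end subspace $\mathbb{R}^{\ell-1}$: any ``extra'' boundary component on a single $U_j$ would produce a class realized by a loop crossing two distinct boundary components of the same $U_j$, but as the exhaustion grows such a class must already lie in the stable image of end-value classes. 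This compatibility, combined with the injectivity established above, forces $m_i - 1 = \ell - 1$, hence $m_i = \ell$ for all $i$ sufficiently large.
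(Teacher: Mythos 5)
The paper does not actually prove this lemma---it is quoted verbatim from Chodosh--Li \cite{chodoshliR4}---so I compare your proposal with the argument there. Your first two steps are correct and match the standard setup: for large $i$ the complement has exactly $\ell$ unbounded components, so $m_i\geq \ell$; and the connecting map $H^0(\partial\Omega_i)/H^0(\Omega_i)\cong\mathbb{R}^{m_i-1}\to H^1_c(\Omega_i^\circ)\to H^1_0(M)$ is injective (your verification via $df=d\phi$, unboundedness of the components, and continuity across $\partial\Omega_i$ is fine, granting $\Omega_i$ connected). This yields $\ell\leq m_i\leq 1+\dim H^1_0(M)$, i.e.\ a uniform bound, but not the equality $m_i=\ell$.

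The gap is in your third step, and it is not merely a missing detail: the statement you propose to prove there is logically equivalent to the conclusion, and the mechanism you offer contradicts your own Step 2. The ``end subspace'' (boundary data constant on $\partial U_j$ for each component $U_j$ of $M\setminus\Omega_i$) is an $(\ell-1)$-dimensional subspace of your $\mathbb{R}^{m_i-1}$; since you have already shown the map $\mathbb{R}^{m_i-1}\to H^1_0(M)$ is injective, its image coincides with the image of the end subspace \emph{if and only if} $m_i=\ell$. In particular, whenever $m_i>\ell$ the ``excess'' class (data $1$ on one boundary component of some $U_j$ and $0$ on another boundary component of the same $U_j$) provably does \emph{not} lie in the image of the end subspace, at that level or at any later level (the end classes are consistent across levels, so their images form a single fixed subspace $W\subset H^1_0(M)$, and injectivity at level $i$ excludes the excess class from $W$). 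So ``such a class must already lie in the stable image of end-value classes'' is false, and stabilization of $\varinjlim_i H^1_c(\Omega_i^\circ)$ cannot rescue it, since the boundary subspaces at different levels are not nested under extension by zero. The actual argument in \cite{chodoshliR4} derives a contradiction not at a single scale but across infinitely many: if some component of $M\setminus\Omega_{j_k}$ had two boundary components $\Sigma_k'\neq\Sigma_k''$ for an infinite sparse subsequence $j_1<j_2<\cdots$, one forms loops $\gamma_k$ consisting of an arc in $\overline{U_k}$ from $\Sigma_k'$ to $\Sigma_k''$ closed up by an arc inside $\Omega_{j_k}$, and pairs the compactly supported classes dual to the hypersurfaces $\Sigma_k'$ against these loops; the intersection matrix is triangular with $\pm1$ on the diagonal (since $\gamma_k$ meets $\Sigma_k'$ exactly once and $\gamma_k\subset\Omega_{j_{k+1}}^\circ$ misses all later $\Sigma_m'$), so the classes are linearly independent in $H^1_0(M)$, contradicting finite dimensionality. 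That global accumulation of independent classes is the idea missing from your proposal.
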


	\section{Proof of main results}

	In this section we prove the main theorem. In fact, we prove a more general theorem. 
	\begin{theorem}\label{maintheoremlater}
		A complete constant mean curvature hypersurface $M$ in $(X^4,g)$ with the bounded geometry satisfying 
		
		\[3H_M^2+\inf_M\operatorname{Ric}_X\geq\delta\]
		for a positive constant $\delta $ and
		$$\operatorname{Ind}(M)<\infty, \ \ \operatorname{dim}(H_0^1(M))<\infty,\ \ \#\{\text{ends of $M$}\}<\infty$$
		must be compact.
	\end{theorem}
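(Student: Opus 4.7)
I would argue by contradiction: assume $M$ is noncompact and establish that $|M|<\infty$, which contradicts the infinite volume of each end shown inside the proof of Lemma~\ref{noparabolicends}.

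First I would collect the available analytic ingredients. Lemma~\ref{noparabolicendsingeneralmanifold} rules out parabolic ends, so Corollary~\ref{areavolumeinequality} supplies the Poincar\'e-type volume bound $|\Omega|\le C\tau^{-2}|\Omega_\tau\setminus\Omega|$ for every compact $\Omega\subset M$ and every $\tau>0$. Outside a compact set $K\subset M$ the hypersurface is stable, so Lemma~\ref{curvature estimate} yields a uniform bound on $|A_M|$ there; combined with the bounded geometry of $X$, this forces $\operatorname{Ric}_M\ge -C_0$ on $M\setminus K$, whence Bishop--Gromov applied inside $M$ produces a nondecreasing function $V$ with $|B_r^M(q)|\le V(r)$ for every $q\in M\setminus K$ and every $r>0$.

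Next, fix $L>8\pi/\sqrt{\epsilon_0}$ and $R_0$ with $B_{R_0}(p)\supset K$. For $R\ge R_0$ I would apply Theorem~\ref{bubblediameterestiamte} component-by-component to the stable CMC hypersurface $M\setminus B_R(p)$; this upgrades $B_R(p)$ to a precompact domain $\tilde\Omega_R\subset B_{R+L/2}(p)$ whose outer boundary $\Sigma_R$ is a disjoint union of smooth spheres, each of intrinsic diameter at most $2\pi/\sqrt{\epsilon_0}$ and area at most $8\pi/\epsilon_0$. After absorbing any bounded components of $M\setminus\tilde\Omega_R$, the family $\{\tilde\Omega_R\}$ becomes an exhaustion whose complementary components are all unbounded, so Lemma~\ref{boundary} applies and forces $\Sigma_R$ to have exactly $\ell$ components once $R$ is large.

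To close, observe that each component of $\Sigma_R$ lies outside $K$ and has intrinsic diameter at most $2\pi/\sqrt{\epsilon_0}$, so for any fixed $\tau>0$ the set $(\tilde\Omega_R)_\tau\setminus\tilde\Omega_R$ is contained in at most $\ell$ intrinsic balls $B_{2\pi/\sqrt{\epsilon_0}+\tau}^M(q_i)$ with $q_i\in M\setminus K$. The Bishop--Gromov bound then gives $|(\tilde\Omega_R)_\tau\setminus\tilde\Omega_R|\le \ell\, V(2\pi/\sqrt{\epsilon_0}+\tau)$ uniformly in $R$, and feeding this into Corollary~\ref{areavolumeinequality} yields $|B_R(p)|\le|\tilde\Omega_R|\le C(\tau,\ell)$ for all $R\ge R_0$. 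Letting $R\to\infty$ produces $|M|<\infty$, the desired contradiction. The main obstacle I anticipate is precisely the uniform control on the number of bubble components: Theorem~\ref{bubblediameterestiamte} a priori allows $\Sigma_R$ to contain an unbounded number of spheres as $R$ grows, and only the topological hypotheses (finitely many ends and finite-dimensional $H^1_0(M)$) fed through Lemma~\ref{boundary} cap this number at $\ell$, which is indispensable for the total area of $\Sigma_R$ to remain bounded as $R\to\infty$.
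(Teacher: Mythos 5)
Your proposal follows essentially the same route as the paper's proof: rule out parabolic ends, use the warped $\mu$-bubble theorem to build an exhaustion whose boundary spheres have controlled diameter and area, invoke Lemma \ref{boundary} to cap the number of boundary components at $\ell$, bound the volume of the collar $(\tilde\Omega_R)_\tau\setminus\tilde\Omega_R$ via the curvature estimate and Bishop--Gromov, and feed this into Corollary \ref{areavolumeinequality} to force finite volume, a contradiction. You also correctly identify the control on the number of bubbles as the crux, exactly as in the paper.
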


	\begin{proof}
		Suppose that $M$ is noncompact, otherwise there is no need to prove. We can also assume that $M$ is stable outside of a large subset $\Omega$. Fix a point $p\in \Omega$ and consider the intrinsic ball $B_R(p)$ for sufficiently large $R$ such that $\Omega\subset B_R(p)$. Since the number of ends of $M$ is finite, we denote it by $\ell$. Then by choosing a larger $R$ if necessary, there are $\ell$ number of unbounded connected components of $M\setminus B_R$. We denote them by $\cup_{i=1}^\ell E_i$. Furthermore, we denote   the union of bounded components in $M\setminus B_R(p)$ by $S$.
		
		Since $3H_M^2+\inf_M \operatorname{Ric}\geq \delta$ implies the curvature assumption in Theorem \ref{bubblediameterestiamte} for some $k\in[1/2,2]$.  Thus we can apply Theorem \ref{bubblediameterestiamte} to $E_i$. In fact, it implies that $12H_M^2+\inf_M R_X\geq 4\delta$ and by the definition of $\operatorname{k-R_X}$, we have that 
		\[\inf_M \operatorname{k-R_X}+(6k+6)H_M^2\geq 2(k+1)\delta.\]
		Then for each fixed $i$, we have a domain $\tilde{\Omega}_i$ such that $\partial\tilde{\Omega}_i=\partial E_i\cup (\cup_{j=1}^{\ell_i}\Sigma_{ij})$ where $\Sigma_{ij}$ are closed spheres with diameter uniformly bounded from above by a constant $C$ depending on $\delta$. Moreover, we know that for each $i,$ all $\Sigma_{ij}$ are contained in the $L/2$-neighborhood of $\partial E_i$ in $E_i$. Denote $\tilde{S}_i$ the union of bounded components of $E_i\setminus \tilde{\Omega}_i.$ Without lost of generality, by reordering, we denote $\cup_{j=1}^{t_i}\Sigma_{ij}$ the subset of  $\cup_{j=1}^{\ell_i}\Sigma_{ij}$ that are the boundary of the unbounded component of $E_i\setminus \tilde{\Omega}_i$. Note that $\cup_{j=1}^{\ell_i}\Sigma_{ij}=(\cup_{j=1}^{t_i}\Sigma_{ij})\cup \partial \tilde{S}_i$.
		
		We now consider the following exhaustion of $M$ in terms of $R$:
		\[\Omega_R=B_R(p)\cup S \cup (\cup_{i=1}^{\ell}\tilde{\Omega}_i)\cup (\cup_{i=1}^{\ell}\tilde{S}_i).\]
		It is not hard to see that each component in $M\setminus \Omega_R$ is unbounded. By the construction, we also know that $\partial\Omega_R=\cup_{i=1}^{\ell}\cup_{j=1}^{t_i}\Sigma_{ij}$. Then it follows from Lemma \ref{boundary}
		that for $R$ sufficiently large,
		\[\#\{\text{components of} \ \partial\Omega_R\}= \sum_{i=1}^{\ell} t_i=t_1+\cdots+t_\ell=\ell.\]
		That is, $t_i=1$ for any $i$. Thus we can simply denote the boundary components of $\partial\Omega_R$ by $\Sigma_1,\cdots,\Sigma_\ell$, each corresponding to the end $E_i$. The diameter and area of $\Sigma_i$ are uniformly bounded from above.
		
		Without the isoperimetric inequality, we can not directly estimate the volume of $\Omega_R$. Nevertheless, in the following we show that the volume of $\Omega_R$ can be controlled. Consider the set $(\Omega_R)_{L/2}$ defined in Corollary \ref{areavolumeinequality} and  denote $(\Omega_R)_{L/2}\setminus \Omega_R$ by $K=\cup_{i=1}^\ell K_i$. We claim that the diameter of each $K_i$ is bounded by a universal constant $C.$ Indeed, consider any two points $x,y\in K_i$ and connect them to $\Sigma_{i}$ by minimizing geodesic segments. Then,
		by triangle inequality and diameter estimate of bubbles in Theorem \ref{bubblediameterestiamte}, we know that diameter of $K_i$ satisfies
		\[\operatorname{diam}_{K_i}(x,y)\leq L+\frac{2\pi}{\sqrt{\epsilon_0}}.\]
		The Gauss equation yields
		\[\operatorname{Ric}_M(e_1,e_1)=\sum_{i=1}^3R_X(e_1,e_i,e_1,e_i)+3h_{11}H-\sum_{i=1}^3h_{1i}^2.\]
		Since $X$ has bounded geometry, the curvature of $X$ is uniformly bounded. Moreover, Theorem \ref{curvature estimate} says that $K_i$ has uniform bounded second fundamental form. Then the Ricci curvature of $M$ is uniformly bounded from below. Hence, by Bishop-Gromov volume
		comparison theorem and bounded diameter of $K_i$, the volume of $K=\cup_{i=1}^\ell K_i$ has a uniform upper bound, denoted by $C.$

		Hence, by Corollary \ref{areavolumeinequality}, we obtain 
		\begin{align*}
			|B_R(p)|&\leq |\Omega_R|\\
			&\leq \frac{4C}{L^2}| (\Omega_{R})_{L/2}\setminus \Omega_R|\\
			&\leq C|K|\\
			&\leq C.
		\end{align*}
		The constant $C$ differs from line to line, but it is a uniform constant independent of $R$.
		This means that $M$ has finite volume, which is a contradiction as $M$ is noncompact CMC hypersurface in manifold with bounded geometry.
		
	\end{proof}

	\begin{remark}
		In general, whether finite index of CMC hypersurface directly implies finite topology or that $H_2^1(M)$ is of finite dimension is not completely known. For minimal hypersurfaces in Euclidean space $\mathbb{R}^{n+1}$, this is true and proved by P. Li and J. Wang \cite{Li-Wang-finiteindex} and C. Li \cite{lichaoindexestimate} in $\mathbb{R}^4$. 
	\end{remark}
	
	\begin{remark}
		It follows from the proof and Lemma \ref{boundary} that if $M$ has finite first $L^2$-Betti number, we can obtain the same result.
	\end{remark}

	\section{A question in hyperbolic space}\label{last section}
	In this section we raise a question that is related to the topic of this paper. The question is to \textit{classify complete noncompact weakly stable constant mean curvature hypersurface in $\mathbb{H}^{4}$ with the mean curvature $H\geq 1$.} 
	
	In the following we provide some motivations.
	Although the weak stability (see the introduction) is usually defined for nonzero constant mean curvature hypersurface while stability is defined for minimal hypersurface, it still makes sense to define weakly stable minimal hypersurface. We turn our discussion back to Euclidean space first. The generalized stable Bernstein problem in $\mathbb{R}^{n+1}$  can be stated as: Classify complete noncompact weakly stable two-sided constant mean curvature hypersurface in $\mathbb{R}^{n+1}$ for $3\leq n+1\leq 7$. This can be divided into two parts. 
	\begin{itemize}
		\item[(1)] do Carmo's question: Complete weakly stable (or more generally finite index) constant mean curvature hypersurface with nonzero mean curvature must be compact ? As mentioned in the introduction, this was proved  for $n+1=3,4$.
		\item[(2)] Complete noncompact weakly stable two-sided minimal hypersurface in $\mathbb{R}^{n+1}$ for $3\leq n+1\leq 7$ must be hyperplane ? 
	\end{itemize}
	
	In fact, we have the following result that follows from  O. Chodosh and C. Li\cites{chodoshliR4,chodoshliR4anisotropic},  G. Catino, P. Mastrolia and A. Roncoroni \cite{catino}, O. Chodosh, C. Li, P. Minter and D. Stryker\cite{chodoshliR5} as well as L. Mazet\cite{mazet}.
	
	\begin{theorem}
		Let $M^n$ be a weakly stable complete noncompact CMC hypersurface (two-sided if it is minimal) in $\mathbb{R}^{n+1}$ for $n=3,4,5$. Then $M$ must be a hyperplane. In particular, complete weakly stable CMC hypersurface with nonzero mean curvature in $\mathbb{R}^6$ must be compact.
	\end{theorem}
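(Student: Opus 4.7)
The plan is to split the argument according to whether $H=0$ or $H\neq 0$ and assemble the cited references dimension by dimension; the ``in particular'' statement in $\mathbb{R}^6$ is then immediate, because a weakly stable noncompact CMC hypersurface there with $H\neq 0$ would, by the main conclusion applied to $n=5$, be forced to be a hyperplane, but hyperplanes are minimal---contradiction. So it suffices to prove the first assertion.

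For the minimal case ($H=0$), I would first upgrade weak stability to ordinary stability. Given $\varphi\in C_c^\infty(M)$ with $\int_M \varphi = c$, pick a smooth cut-off $\psi_R$ supported on an intrinsic annulus far from $\operatorname{supp}\varphi$, rescaled so that $\int_M \psi_R = -c$. Since a complete noncompact minimal hypersurface in $\mathbb{R}^{n+1}$ has infinite volume (by the monotonicity formula), one can arrange both $\int |\nabla \psi_R|^2 \to 0$ and $\int |A|^2 \psi_R^2 \to 0$ as $R\to\infty$ by spreading $-c$ thinly over a large distant annulus. Substituting $\varphi + \psi_R$ into the weak stability inequality and passing to the limit produces the ordinary stability inequality for $\varphi$. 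I then invoke the stable Bernstein theorem in $\mathbb{R}^{n+1}$: Chodosh--Li for $n=3$, Chodosh--Li--Minter--Stryker for $n=4$, and Mazet for $n=5$, each forcing $M$ to be an affine hyperplane.

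For the CMC case ($H\neq 0$), I invoke the resolution of do Carmo's question: a complete weakly stable CMC hypersurface in $\mathbb{R}^{n+1}$ with $H\neq 0$ must be compact, contradicting the noncompact hypothesis. For $n=3,4$ this is due to Elbert--Nelli--Rosenberg and independently Cheng via a Bonnet--Myers-type argument. For $n=5$ the Bonnet--Myers approach breaks down; instead one uses the anisotropic $\mu$-bubble machinery of Chodosh--Li (extended to higher dimensions by Chodosh--Li--Minter--Stryker and Mazet), which treats CMC hypersurfaces directly and yields the required compactness. The main obstacle of this program is precisely the $n=5$, $H\neq 0$ range: the classical Bonnet--Myers reasoning no longer applies there, so one is forced through the more delicate anisotropic $\mu$-bubble framework of the most recent works. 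By comparison, the weak-to-strong stability reduction in the minimal case is routine but must be executed carefully so that both the Dirichlet energy of $\psi_R$ and the curvature term $\int |A|^2 \psi_R^2$ vanish in the limit.
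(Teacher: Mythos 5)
Your proposal has a genuine gap, and it sits exactly where the new content of the theorem lies: the case $n=5$, $H\neq 0$. You dispose of the nonzero mean curvature case by ``invoking the resolution of do Carmo's question,'' but for $n=5$ no such resolution exists in the literature --- the introduction of this very paper records that the question is open for $n\geq 5$, and the works of Chodosh--Li, Chodosh--Li--Minter--Stryker and Mazet that you cite prove the stable Bernstein theorem for \emph{minimal} hypersurfaces in $\mathbb{R}^5$ and $\mathbb{R}^6$; they do not ``treat CMC hypersurfaces directly'' or yield compactness of weakly stable CMC hypersurfaces with $H\neq 0$. Since the ``in particular'' clause of the theorem \emph{is} precisely that compactness statement in $\mathbb{R}^6$, your argument is circular there. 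The paper avoids the case split entirely: it proves a curvature estimate $|A_M|(x)\,d_M(x,\partial M)\leq C$ for weakly stable compact CMC hypersurfaces with boundary by a point-picking/blow-up argument, in which the rescaled limit $M_\infty$ is a complete weakly stable \emph{minimal} immersion with bounded second fundamental form; the Gauss equation then bounds $\operatorname{Ric}_{M_\infty}$ from below, Bishop--Gromov gives $|B_{R+1}|\leq C|B_R|$, and the Barbosa--B\'erard theorem (weak index equals strong index under this volume growth) upgrades weak stability to stability, so the stable Bernstein theorems apply to $M_\infty$ and give a contradiction. Letting $\partial M$ diverge then forces $|A_M|\equiv 0$, which handles $H=0$ and $H\neq 0$ simultaneously.

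A second, smaller gap is your weak-to-strong stability upgrade in the minimal case. Writing $Q$ for the stability form, you need $\psi_R$ with fixed integral $-c$ and $\int_M|\nabla\psi_R|^2\to 0$ (the term $-\int|A|^2\psi_R^2$ has a favorable sign and need not vanish). With the obvious choice $\psi_R=-c\,\eta_R/\int\eta_R$ this requires $|B_{2R}|/(R^2|B_R|^2)\to 0$, which infinite volume alone does not guarantee: a complete minimal hypersurface in $\mathbb{R}^{n+1}$ carries no a priori upper bound on its intrinsic volume growth, and super-exponential growth defeats the naive cutoff. The paper sidesteps this by performing the upgrade only on the blow-up limit, where bounded $|A|$ supplies the needed volume growth control via Bishop--Gromov. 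If you want to salvage your route, you must either establish volume growth control on $M$ itself first, or restructure the argument as the paper does.
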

	\begin{proof}
		We show that the curvature estimate 
		\[|A_M|(x)d_M(x,\partial M)\leq C\]
		holds for weakly stable compact CMC hypersurface $M$. Otherwise, we use the standard point-picking argument to get a limiting hypersurface $M_\infty$ which is a complete noncompact, two-sided, weakly stable minimal immersion into $\mathbb{R}^{n+1}$ with $|A_{M_\infty}|(0)=1$ and the bounded second fundamental form. It is known that the area of $M_\infty$ is infinity and by the Gauss equation, 
		\[\operatorname{Ric}_{M_\infty}(e_1,e_1)=-\sum_{i=1}^{n}h^2_{1i},\]
		so the Ricci curvature of $M_\infty$ has a lower bound. By the Bishop-Gromov volume comparison theorem, we have that $|B_{R+1}|\leq C|B_R|$ (see the remark in \cite[Page 440]{Barbosa-Berard-twisted-eigenvalue}). Thus according to L. Barbosa and P. Berard \cite[Theorem 3.10]{Barbosa-Berard-twisted-eigenvalue}, the weak index and strong index of $M_\infty$ coincide, so $M_\infty$ must be stable. Hence, by stable Bernstein theorem mentioned above, $M_\infty$ is a hyperplane, contradicting with the second fundamental form at origin.
		
		The curvature estimate by letting $\partial M$ diverge to infinity implies that $M$ must be hyperplane. 
	\end{proof}
	
	The question raised in the hyperbolic space at the beginning of this section can be analyzed in a similar manner. We partially answer this question in the current paper for $H>1$.

	\bibliography{refs}
	
\end{document}